\documentclass[12pt]{amsart}    
\usepackage[colorlinks]{hyperref}
\usepackage{amsmath,amssymb,latexsym, amscd}
\usepackage{mathabx}
\usepackage{exscale, cite, eps fig, graphics}
\usepackage{mathtools}
\usepackage{enumitem}
\usepackage{lipsum}
\usepackage{cancel}

\setlength{\textheight}{23.0cm}
\setlength{\textwidth}{16.5cm}
\setlength{\headsep}{0.5cm}
\setlength{\footskip}{1.0cm}
\calclayout

\renewcommand{\geq}{\geqslant}
\renewcommand{\leq}{\leqslant}

\newcommand{\T}{\mathbb{T}}

\renewcommand{\v}{\boldsymbol{v}}

\newcommand{\g}{\gamma}

\newcommand{\R}{\mathbb{R}}
\newcommand{\Z}{\mathbb{Z}}
\newcommand{\C}{\mathbb{C}}

\newtheorem{theorem}{Theorem}[section]
\newtheorem{lemma}[theorem]{Lemma}

\newtheorem{definition}[theorem]{Definition} 
\newtheorem{corollary}[theorem]{Corollary}

\numberwithin{equation}{section}

\title[Transverse spectral instability in KD]{Transverse Spectral Instabilities in Konopelchenko-Dubrovsky Equation}
\author[Bhavna]{Bhavna$^\ast$}
\author[Pandey]{Ashish~Kumar~Pandey$^\ast$}
\author[Singh]{Sudhir~Singh$^\dagger$}
\address{$^\ast$Department of Mathematics, IIIT Delhi,  110020, India\\
$^\dagger$Department of Mathematics, NIT Trichy, 620015, India}
\email{bhavnai@iiitd.ac.in, ashish.pandey@iiitd.ac.in, sudhirew@gmail.com}
\date{\today}
\begin{document}
\maketitle
 \begin{abstract}
 We study the transverse spectral stability of the one-dimensional small-amplitude periodic traveling wave solutions of the  (2+1)-dimensional Konopelchenko-Dubrovsky (KD) equation. We show that these waves are transversely unstable with respect to two-dimensional perturbations that are periodic in both directions with long wavelength in the transverse direction. We also show that these waves are transversely stable with respect to perturbations which are either mean-zero periodic or square-integrable in the direction of the propagation of the wave and periodic in the transverse direction with finite or short wavelength. We discuss the implications of these results for special cases of the KD equation - namely, KP-II and mKP-II equations.
 \end{abstract}
 \section{Introduction}
 We consider the (2+1)-dimensional Konopelchenko-Dubrovsky (KD) equation  \cite{Konopelchenko1984SomeDimensions,Konopelchenko1992IntroductionEquations} 
\begin{equation}\label{e:KD}
\centering
\left\{\begin{aligned}
u_t-u_{xxx}-6 \rho u u_x +\dfrac{3}{2}\phi^2 u^2 u_x-3 v_{y}+3 \phi u_x v &= 0, \\
u_y &= v_x, \\ 
\end{aligned}\right.
 \end{equation} 
where, $u=u(x,y,t)$, $v=v(x,y,t)$, the subscripts denote partial differentiation, $\rho$ and $\phi$ are real parameters, defining the magnitude of nonlinearity in wave propagation, modelled for stratified shear flow, the internal and
shallow-water waves and the plasmas \cite{Xu2011PainleveComputation}, can also be regarded as combined KP
and modified KP equation \cite{Zhang2009Symbolic-computationMethod}, or generalized (2+1)D Gardner equation \cite{Konopelchenko1991InverseEquation}.

\subsection*{Models} The $(1+1)$-dimensional reduction of the KD equation \eqref{e:KD} is the Gardner equation\cite{Krishnan2011AEquation,Xu2009AnalyticPhysics}
\begin{equation}\label{e:gar}
u_t-u_{xxx}-6\rho u u_x+\dfrac{3}{2}\phi^2u^2u_x=0,
\end{equation}which is example of the generalized Korteweg-de-Vries equation(gKdV)\cite{Korteweg1895Waves}, that is
\[u_t+(g(u)-u_{xx})_x=0,\]
where $g:\mathbb{R}\to\mathbb{R}$ is a smooth real function. Gardner equation \eqref{e:gar} can be reduced to KdV and modified KdV equations for $\phi=0$ and $\rho=0$, respectively. 

For $\phi=0$, \eqref{e:KD} is Kadomtsev-Petviashvili (KP) equation with negtaive dispersion \cite{Kadomtsev1970OnMedia}
\begin{equation}\label{e:KP}
    (u_t-u_{xxx}-6\rho u u_x)_x-3u_{yy}=0,
\end{equation}
which is also known as KP-II equation. Modified KP-II\cite{Sun2009InelasticElectrodynamics}(say, mKP-II) reads from \eqref{e:KD} for $\rho=0$,
\begin{equation}\label{e:mKP}
\left(u_t-u_{xxx}+\dfrac{3}{2}\phi^2u^2u_x\right)_x-3u_{yy}=0.
\end{equation}
\subsection*{Integrability} The KD equation \eqref{e:KD} is integrable \cite{Konopelchenko1984SomeDimensions,  Zhang2009Symbolic-computationMethod, Maccari1999AEquation}. Integrability is an useful property to have for evolution equations, especially in higher dimensions. It gives sufficient freedom to explore the equation through different aspects. It also helps significantly to observe nonlinear coherent structures like rogue waves, breathers, solitons and elliptic waves in the systems \cite{Ma2020MultipleEquation, Liu2019LumpEquation,  Wu2018ComplexitonEquation, Yuan2018SolitonsEquations, Ren2016TheSolutions}. Some well-known integrable water wave models are classical KdV, KP, and Schr\"odinger equations. The KD equation \eqref{e:KD}, like similar evolution equations in (2+1) dimension, for example, Kadomtsev-Petviashvili equation, Davey-Stewarson equation, and the three-wave equations, is solvable through  Inverse Scattering Transform (IST) \cite{Konopelchenko1984SomeDimensions}.  It is among the few nonlinear evolution equations which are completely integrable in different settings. Notably, the considered KD equation is also integrable in the  Painlev\'e sense and solvable through IST \cite{Xu2011PainleveComputation, Konopelchenko1984SomeDimensions,  Konopelchenko1991InverseEquation}.
\subsection*{Dispersion Relation} Assuming a plane-wave solution of the form 
\[u(x,y,t)=e^{i(kx-\Omega t+\gamma y)},
\]
for the linear part 
\[(u_t-u_{xxx})_x-3u_{yy}=0,
\]
of the KD equation \eqref{e:KD}, we arrive at the dispersion relation 
\[\Omega(k)=k^3-\dfrac{3\gamma^2}{k}.
\]
\subsection*{Small amplitude periodic traveling waves} The $y$-independent periodic traveling wave solution of the KD equation \eqref{e:KD} that are also solutions of the Gardner equation \eqref{e:gar}, is of the form \[\begin{pmatrix}
u(x,y,t)\\v(x,y,t)
\end{pmatrix}=\begin{pmatrix}
u(x-ct)\\v(x-ct)
\end{pmatrix},
\]for some $c\in\mathbb{R}$. Under this assumption, we arrive at
\begin{equation}\label{e:pt}
\centering
\left\{\begin{aligned}
    -cu_x-u_{xxx}+\dfrac{\phi^2}{2}(u^3)_x-3\rho (u^2)_x+3\phi u_xv=0,\\v_x=0,\\
\end{aligned}\right.
\end{equation} which implies $v=b_1$, where $b_1$ is an arbitrary constant. Substituting $v=b_1$ and integrating, \eqref{e:pt} is reduced to
\begin{equation}\label{e:pt1}
   -cu-u_{xx}+\dfrac{\phi^2}{2}u^3-3\rho u^2+3\phi u b_1=b_2,
\end{equation} where $b_1,b_2\in\mathbb{R}$. Let $u$ be a $2\pi/k$-periodic function of its argument, for some $k>0$. Then, $w(z):=u(x)$ with $z=kx$, is a $2\pi$-periodic function in $z$, satisfying
\begin{equation}
    -cw-k^2w_{zz}+\dfrac{\phi^2}{2}w^3-3\rho w^2+3\phi wb_1=b_2,
\end{equation}
For a fixed $\phi$ and $\rho$, let $F:H^2(\mathbb{T})\times \mathbb{R}\times \mathbb{R}^+\times\mathbb{R}\times\mathbb{R}
\to L^2(\mathbb{T)}$ be defined as 
\[
F(w,c;k,b_1,b_2)=-cw-k^2w_{zz}+\dfrac{\phi^2}{2}w^3-3\rho w^2+3\phi wb_1-b_2.
\]
We try to find a solution $w\in H^2(\mathbb{T})$ of
\begin{equation}\label{e:f}
   F(w,c;k,b_1,b_2)=0.
\end{equation}
For any $c\in\mathbb{R}$, $k>0$, $b_1,b_2\in\mathbb{R}$ and $|b_1|, |b_2|$ sufficiently small, note that
\begin{equation}
  w_0(c,k,b_1,b_2)=- \dfrac{1}{c}b_2+O((b_1+b_2)^2),
\end{equation}
make a constant solution of \eqref{e:f}. Note that $w_0\equiv 0$ if $b_1=b_2=0$. If non-constant solutions of \eqref{e:f} bifurcate from $w_0\equiv 0$ for some $c=c_0$ then $\text{ker}(\partial_wF(0,c_0;k,0,0))$ is non-trivial. Note that
\[\text{ker}(\partial_wF(0,c_0;k,0,0))=\text{ker}(-c_0-k^2\partial_z^2)=\text{span}\{e^{\pm iz}\},
\]provided that $c_0=k^2$. 

The periodic traveling waves of \eqref{e:gar} exist (see, \cite{Bronski2016ModulationalType}), and by following the Lyapunov-Schmidt procedure, their small-amplitude expansion is as follows.
\begin{theorem}
For any $k>0$, $b_1,b_2\in\mathbb{R}$ and $|b_1|,|b_2|$ sufficiently small, a one parameter family of solutions of \eqref{e:KD}, denoted by\[
\begin{pmatrix}
u(x,t)\\v(x,t)
\end{pmatrix}=\begin{pmatrix}w(a,b_1,b_2)(z)\\v(z)
\end{pmatrix}\]where $z=k(x-c(a,b_1,b_2)t)$, $|a|$ sufficiently small, $w(a,b_1,b_2)(z)$ is smooth, even and $2\pi$-periodic in $z$ and $c$ is even in $a$, is given by
\begin{equation}\label{e:expptw}
\left\{
\begin{aligned}
w(a,b_1,b_2)(z)=&-\dfrac{1}{k^2}b_2+a\cos z+a^2(A_0+A_2\cos 2z)+a^3A_3\cos 3z+O(a^4+a^2(b_1+b_2)^2),\\
v(z)=&b_1,\\
c(a,b_1,b_2)=&k^2+3\phi b_1+\dfrac{3\rho}{k^2}b_2+a^2c_2+O(a^4+a^2(b_1+b_2)^2),
\end{aligned}\right.
\end{equation}
where
\begin{equation}
A_0=-\dfrac{3\rho}{2k^2},\quad A_2=\dfrac{\rho}{2k^2},\quad A_3=-\dfrac{\phi^2}{64k^2}+\dfrac{3\rho^2}{16k^4},\quad \text{and}\quad c_2=\dfrac{3\phi^2}{8}+\dfrac{15\rho^2}{2k^2}.
\end{equation}
\end{theorem}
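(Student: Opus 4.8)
The plan is to apply the Lyapunov–Schmidt reduction to the scalar equation
\begin{equation*}
   F(w,c;k,b_1,b_2)=-cw-k^2w_{zz}+\tfrac{\phi^2}{2}w^3-3\rho w^2+3\phi wb_1-b_2=0
\end{equation*}
near the trivial solution $w\equiv 0$, $c=c_0=k^2$, $b_1=b_2=0$, where $\ker(\partial_wF)=\operatorname{span}\{e^{\pm iz}\}$ is two-dimensional. First I would set up the decomposition $L^2(\mathbb{T})=\ker(L_0)\oplus\operatorname{range}(L_0)$ with $L_0:=-c_0-k^2\partial_z^2$, let $P$ be the spectral (Fourier) projection onto $\ker(L_0)$, and write $w=a\cos z+\Pi$ with $\Pi\in(I-P)H^2(\mathbb{T})$ (here I use that the wave can be taken even, which kills the $\sin z$ mode and reduces the kernel direction effectively to $\cos z$, and fixes the translational symmetry). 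The range equation $(I-P)F=0$ is solved by the implicit function theorem for $\Pi=\Pi(a,c,b_1,b_2)$, smooth, with $\Pi=O(|a|(|a|+|b_1|+|b_2|))$; the bifurcation equation $PF=0$ then becomes a scalar equation determining $c=c(a,b_1,b_2)$.

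Second, I would compute the expansion by plugging the ansatz
\begin{equation*}
  w=-\tfrac{1}{k^2}b_2+a\cos z+a^2(A_0+A_2\cos 2z)+a^3A_3\cos 3z+\cdots,\qquad c=k^2+3\phi b_1+\tfrac{3\rho}{k^2}b_2+a^2c_2+\cdots
\end{equation*}
into $F=0$ and matching powers of $a$ (and the lowest-order terms in $b_1,b_2$) together with Fourier modes $e^{0iz},e^{\pm iz},e^{\pm 2iz},e^{\pm 3iz}$. The constant-mode balance at order $b_2$ forces the $-b_2/k^2$ term and the shift $3\rho b_2/k^2$ in $c$; the $\cos z$ balance at order $ab_1$ forces the shift $3\phi b_1$ in $c$; the constant and $\cos 2z$ balances at order $a^2$ give $A_0$ and $A_2$ from the quadratic term $-3\rho w^2$ (whence $A_0=-3\rho/(2k^2)$, $A_2=\rho/(2k^2)$, using that $-c_0-k^2(2i)^2= -k^2+4k^2=3k^2$); the $\cos z$ balance at order $a^3$ is the solvability (Fredholm) condition that yields $c_2$, and the $\cos 3z$ balance at order $a^3$ gives $A_3$ from the cubic term $\tfrac{\phi^2}{2}w^3$ and the quadratic cross term $-3\rho w^2$. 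The cubic self-interaction $\tfrac{\phi^2}{2}(a\cos z)^3=\tfrac{\phi^2}{8}a^3(3\cos z+\cos 3z)$ contributes $3\phi^2/8$ to $c_2$ and the $\cos 3z$ piece to $A_3$; the quadratic term feeding back through $a^2(A_0+A_2\cos2z)$ against $a\cos z$ contributes the $\rho^2$ pieces, giving $c_2=\tfrac{3\phi^2}{8}+\tfrac{15\rho^2}{2k^2}$ and $A_3=-\tfrac{\phi^2}{64k^2}+\tfrac{3\rho^2}{16k^4}$.

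Finally, I would record that $w$ and $c$ are smooth in $(a,b_1,b_2)$ by smoothness of the nonlinearity and of the implicit-function solution, that $w$ is even in $z$ by uniqueness within the even subspace (the construction is carried out in $H^2_{\mathrm{even}}(\mathbb{T})$), and that $c$ is even in $a$ because $F$ is odd under $(w,a)\mapsto(-w,-a)$ up to the even $b$-dependent terms — more precisely, the bifurcation equation is invariant under $a\mapsto -a$ after using the phase freedom, so only even powers of $a$ appear. The statement about $v\equiv b_1$ is immediate from the traveling-wave reduction \eqref{e:pt}. The main obstacle is purely bookkeeping: organizing the simultaneous expansion in the three small parameters $a,b_1,b_2$ and carefully tracking which Fourier mode each nonlinear term feeds, so that the solvability condition at order $a^3$ is extracted correctly; the functional-analytic part (Fredholm property of $L_0$ on $L^2(\mathbb{T})$, applicability of the implicit function theorem) is standard and essentially identical to the Gardner/gKdV case treated in \cite{Bronski2016ModulationalType}.
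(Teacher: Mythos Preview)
Your proposal is correct and follows precisely the approach the paper indicates: the paper does not give a detailed proof but simply states that the expansion is obtained ``by following the Lyapunov--Schmidt procedure'' (citing \cite{Bronski2016ModulationalType} for existence), and your sketch fills in exactly those details --- the kernel/range decomposition of $L_0=-c_0-k^2\partial_z^2$, solving the range equation by the implicit function theorem in the even subspace, and extracting $A_0,A_2,A_3,c_2$ by matching Fourier modes at successive orders in $a$. One small refinement: your justification that $c$ is even in $a$ via ``$F$ is odd under $(w,a)\mapsto(-w,-a)$'' is not quite right because of the quadratic term $-3\rho w^2$; the cleaner argument is the one you allude to with ``phase freedom'' --- the half-period shift $z\mapsto z+\pi$ sends $\cos z\mapsto-\cos z$, so by uniqueness $w(-a,\cdot)(z)=w(a,\cdot)(z+\pi)$ and hence $c(-a)=c(a)$.
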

\subsection*{Transverse stability} For the KD equation \eqref{e:KD}, the solution and integrability aspects have been studied thoroughly, see \cite{Xu2010IntegrableEquation, Zhang2009Symbolic-computationMethod, Yang2008TravellingMethod,    Kumar2016SimilarityTheory}, for instance. However, to the best of the authors' knowledge, any result on the stability aspects of the periodic traveling waves or solitary waves of the KD equation \eqref{e:KD} has not been discussed so far. However, the transverse instability of periodic traveling waves has been studied for many similar equations, for instance, for the KP equation in \cite{Bhavna2021TransverseEquation,Hakkaev2012TransverseEquations,Johnson2010TransverseEquation,Haragus2011TransverseEquation}, for Zakharov-Kuznetsov (ZK) equation in \cite{Chen2012AEquation,Johnson2010TheEquations}. Transverse instability of solitary wave solutions of various water-wave models has also been explored by several authors, see \cite{Groves2001TransverseWaves,Pego2004OnTension,Rousset2009TransverseModels,Rousset2011TransverseWater-waves}. Motivated by the importance of nonlinear waves propagation and its stability, we investigate the transverse spectral instability of the KD equation. We aim to study the (in)stability of the $y$-independent, that is, (1+1)-dimensional periodic traveling waves \eqref{e:expptw} of \eqref{e:KD} with respect to two-dimensional perturbations which are either periodic or non-periodic in the $x$-direction and always periodic in the $y$-direction. The periodic nature of the perturbations in the $y$-direction is classified into two categories: long wavelength and finite or short-wavelength transverse perturbations. The (in)stabilities that occur due to long-wavelength transverse perturbations are termed as {\em modulational transverse (in)stabilities}. Furthermore, we use the term {\em high-frequency transverse (in)stabilities} to refer to those transverse (in)stabilities that are occurring due to finite or short-wavelength transverse perturbations. 
Moreover, depending on the periodic or non-periodic nature of perturbations in the direction of propagation of the one-dimensional wave, we term the resulting instability as transverse instability with respect to periodic or non-periodic perturbations, respectively. Our main results are the following theorems depicting the transverse stability and instability of small amplitude periodic traveling waves \eqref{e:expptw} of \eqref{e:KD}.


\begin{theorem}[Transverse stability]\label{t:3}
Assume that small-amplitude periodic traveling waves \eqref{e:expptw} of \eqref{e:KD} are spectrally stable in $L^2(\mathbb T)$ as a solution of the corresponding $y$-independent one-dimensional equation. Then, for any $a$ sufficiently small, $\rho\in \mathbb{R}$, $\phi\in \mathbb{R}$, and $k>0$, periodic traveling waves \eqref{e:expptw} of \eqref{e:KD} are transversely stable with respect to two-dimensional perturbations which are either mean-zero periodic or non-periodic (localized or bounded) in the direction of propagation and finite or short wavelength in the transverse direction.
\end{theorem}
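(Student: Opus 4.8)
The plan is to reduce the two-dimensional spectral problem to a family of one-dimensional ones. Decompose the perturbation into $e^{i\eta y}$ in the transverse variable ($\eta$ the transverse wavenumber) and, to cover perturbations that are localized or bounded — rather than co-periodic — in the direction of propagation, add a Bloch decomposition in $x$ with exponent $\x\in[-1/2,1/2)$; the mean-zero co-periodic case is the $\x=0$ fibre restricted to functions of zero average, which is precisely where the nonlocal operator below requires zero average. Linearizing \eqref{e:KD} about the wave \eqref{e:expptw} in the rescaled moving frame $z=k(x-ct)$, writing the perturbation of $(u,v)$ as $(p(z),q(z))e^{i\eta y}$, and using $u_y=v_x$ to eliminate $q=(i\eta/k)\,\partial_z^{-1}p$, one obtains a spectral problem $\mathcal{A}_\eta(a)\,p=\lambda p$ in which $\mathcal{A}_\eta(a)$ is analytic in $a$ and $\eta$, $\mathcal{A}_0(a)$ is exactly the linearization of the $y$-independent Gardner equation \eqref{e:gar} about the same profile — whose spectrum is assumed to lie on $i\R$ — and the transverse coupling enters through the definite-sign nonlocal term $\tfrac{3\eta^{2}}{k}\,\partial_z^{-1}p$ together with a term $3\phi\,i\eta\,w_z\,\partial_z^{-1}p$ that is simultaneously of order $a$ and of order $\eta$.

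The next step is to understand the unperturbed operator $\mathcal{A}_\eta(0)$, which has $z$-independent coefficients and is therefore diagonal in the Bloch--Fourier basis $\{e^{i(n+\x)z}\}$: its spectrum is $\{\,i\,\omega(n+\x;\eta):n\in\Z\,\}$ with $\omega(\mu;\eta)=k^{3}\mu(1-\mu^{2})+\tfrac{3\eta^{2}}{k\mu}+O(|b_1|+|b_2|)$, in particular purely imaginary. Since the profile $w$ is even, the reflection $z\mapsto-z$ composed with complex conjugation is an anti-linear involution that intertwines $\mathcal{A}_\eta(a)$ with $-\overline{\mathcal{A}_\eta(a)}$, so the spectrum is symmetric about the imaginary axis and a simple purely imaginary eigenvalue of $\mathcal{A}_\eta(0)$ perturbs, for $|a|$ small, to a simple purely imaginary eigenvalue of $\mathcal{A}_\eta(a)$. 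Hence the only possible source of instability is an eigenvalue collision, i.e. a parameter value $(\x,\eta)$ at which $\omega(n+\x;\eta)=\omega(m+\x;\eta)$ for some $n\neq m$. Because the problem carries the Hamiltonian/reversible structure of a KP-type equation, such a collision can open an instability bubble only if the two colliding eigenvalues have opposite Krein signature; one therefore computes, once and for all, the signature of each unperturbed mode $i\,\omega(\mu;\eta)$ from the Hessian of the linearized operator (equivalently, via the Lyapunov--Schmidt/Kato reduction to a $2\times2$ matrix $M$ at each collision, whose off-diagonal product changes sign exactly with the signature mismatch).

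The stability statement then follows by showing that no opposite-signature collision occurs for $\eta$ finite or short-wavelength, i.e. for $|\eta|\ge\eta_0>0$. For large $|\eta|$ the term $\tfrac{3\eta^{2}}{k\mu}$ spreads the values $\omega(n+\x;\eta)$ apart by order $\eta^{2}$, so $\mathcal{A}_\eta(0)$ has no multiple eigenvalues at all and, by the persistence statement above, $\mathcal{A}_\eta(a)$ has purely imaginary spectrum. For $\eta$ in a bounded set bounded away from $0$ one has to locate the collision loci, which solve $3\eta^{2}=k^{4}\mu\nu(1-\mu^{2}-\mu\nu-\nu^{2})$ with $\mu=n+\x$, $\nu=m+\x$ (up to $O(|b_1|+|b_2|)$ corrections), and check that the Krein signatures agree along each of them — here the assumed one-dimensional spectral stability is the base case $\eta=0$, and it isolates the single opposite-signature collision at $\mu=\pm1$, $\x=0$, $\eta=0$ (the one responsible for the modulational transverse instability in the companion theorem), which is excluded once $\eta\neq0$ because at $\x=0$ the modes $\mu=\pm1$ then sit at $\lambda=\pm\tfrac{3i\eta^{2}}{k}$, distinct and bounded away from $0$. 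Setting $\phi=0$ or $\rho=0$ yields the corresponding conclusions for KP-II and mKP-II.

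I expect the main obstacle to be precisely this last verification: proving that every eigenvalue collision occurring at a nonzero transverse wavenumber is between modes of the same Krein signature — this needs a complete description of the collision loci of $\omega(\cdot;\eta)$ for $|\eta|\ge\eta_0$ (which pairs $(n,m)$ can collide, and where) together with a sign computation of the reduced discriminant along them using the explicit profile \eqref{e:expptw}. I anticipate the definite (``$+$'') sign with which the KP-II-type transverse term $\tfrac{3\eta^{2}}{k}\,\partial_z^{-1}$ enters — the same sign as the $k^{3}$-dispersion — to be what forces all these collisions to be signature-preserving, hence not to produce an instability bubble. A secondary, purely technical, issue is careful bookkeeping of the nonlocal operator $\partial_z^{-1}$ across the three admissible classes of perturbations (mean-zero co-periodic, localized, and bounded in $x$), ensuring the Bloch decomposition is valid and no spectrum is missed at $\x=0$.
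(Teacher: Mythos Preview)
Your overall reduction (Bloch in $x$, Fourier in $y$, eliminate $v$ via $\partial_z^{-1}$, identify the reflection–conjugation symmetry so that only eigenvalue collisions can seed instability) matches the paper. The subsequent argument, however, diverges from the paper in two essential ways.

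First, a genuine gap: the claim that for large $|\eta|$ the term $3\eta^{2}/(k\mu)$ ``spreads the values apart'' so that $\mathcal A_\eta(0)$ has no multiple eigenvalues is false. For \emph{fixed} $n,m$ the eigenvalues do separate as $|\eta|\to\infty$, but the set of collision loci is not bounded in $\eta$: for each integer gap $\Delta=|m-n|$ the collision condition $3\eta^{2}=-k^{4}\bigl[3(n+\xi_0)^{2}(n+\xi_0+\Delta)^{2}+(n+\xi_0)(n+\xi_0+\Delta)(\Delta^{2}-1)\bigr]$ forces $\eta^{2}\in\bigl[\tfrac{k^{4}}{48}\Delta^{2}(\Delta^{2}-4),\,\tfrac{k^{4}}{36}(\Delta^{2}-1)^{2}\bigr]$, and these windows march off to infinity as $\Delta\to\infty$. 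Every transverse wavenumber $|\eta|>\eta_0$ sits on at least one collision curve, so there is no ``large $|\eta|$'' regime free of collisions; you cannot avoid the collision analysis at short wavelength.

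Second, the paper does \emph{not} settle stability by Krein signature. At each collision it carries out the Kato/Lyapunov--Schmidt reduction you mention, but then computes the $2\times2$ discriminant explicitly rather than inferring its sign from a signature argument. The stabilizing mechanism is different from the one you anticipate: for $\Delta\ge 3$ the off-diagonal coupling is $O(a^{\Delta})$ because the profile couples Fourier modes $n$ and $n+\Delta$ only at order $a^{\Delta}$, whereas the diagonal entries already split at order $a^{2}$ through the wave-speed correction $c_2$. Consequently the discriminant is $\Delta^{2}\beta_2^{2}a^{4}+O(a^{5})>0$ for $|a|$ small, irrespective of the sign of $T_{12}T_{21}$, i.e.\ irrespective of Krein signature. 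The case $\Delta=2$ (only present for $\xi_0\neq0$) is handled by an explicit discriminant computation in which the key sign comes from $(n+\xi_0)(n+2+\xi_0)<0$ at the collision, again not from a signature check. So your expectation that ``the `+' sign of the KP-II-type transverse term forces all collisions to be signature-preserving'' is neither what the paper proves nor what actually drives the result; if you want to run a Krein argument you would have to verify the signature at every collision for every $\Delta$, and you have not done so.
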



\begin{theorem}[Transverse instability]\label{t:2} For a fixed $\rho\in\mathbb{R}$ and $\phi\neq 0$, sufficiently small amplitude periodic traveling waves \eqref{e:expptw} of KD equation suffers modulational transverse instabilities with respect to periodic perturbations if 
\[
k>2\left|\dfrac{\rho}{\phi}\right| \quad \text{and} \quad |\gamma|<k|a|\sqrt{\left|\dfrac{\phi^2}{4}-\dfrac{\rho^{2}}{k^2}\right|}+O(a(\gamma+a)).
\]
\end{theorem}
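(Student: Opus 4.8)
The plan is to linearize \eqref{e:KD} about the wave \eqref{e:expptw} in the co-moving frame, reduce the spectral problem by a Fourier--Bloch decomposition, and then run a perturbation argument in the amplitude $a$ and the transverse wavenumber $\gamma$, both small, based at the constant state $a=0$. Setting $z=k(x-ct)$, writing $u=w+\tilde u$, $v=b_1+\tilde v$ and keeping linear terms, I look for co-periodic perturbations $(\tilde u,\tilde v)=e^{\lambda t+i\gamma y}(U(z),V(z))$ with $U,V$ $2\pi$-periodic. The constraint $u_y=v_x$ becomes $i\gamma U=kV_z$, which for $\gamma\neq0$ forces $U$ to have zero mean in $z$ and gives $V=\tfrac{i\gamma}{k}\partial_z^{-1}U$; substituting into the first equation, and taking $b_1=b_2=0$ for clarity (the general small case is handled similarly), the problem becomes $\lambda U=\L_{\gamma,a}U$ on the mean-zero subspace of $L^2(\T)$, where
\[
\L_{\gamma,a}=ck\,\partial_z+k^3\partial_z^3+6\rho k\,\partial_z\!\big(w\,\cdot\big)-\tfrac32\phi^2k\,\partial_z\!\big(w^2\,\cdot\big)-3i\phi\gamma\,w_z\,\partial_z^{-1}-\tfrac{3\gamma^2}{k}\,\partial_z^{-1},
\]
with $w=w(a,0,0)$, $c=c(a,0,0)$ as in \eqref{e:expptw} and $\partial_z^{-1}$ the bounded inverse of $\partial_z$ on mean-zero functions (this subspace being exactly the co-periodic perturbations compatible with $u_y=v_x$ when $\gamma\neq0$).

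At $a=0$ one has $w\equiv0$, $c=k^2$, and $\L_{\gamma,0}=k^3\partial_z+k^3\partial_z^3-\tfrac{3\gamma^2}{k}\partial_z^{-1}$ is diagonal in the basis $\{e^{inz}\}_{n\in\Z\setminus\{0\}}$ with purely imaginary eigenvalues $i\omega_n(\gamma)$, $\omega_n(\gamma)=k^3n(1-n^2)+\tfrac{3\gamma^2}{kn}$ (the dispersion relation of the introduction, shifted to the moving frame). At $\gamma=0$ the point $\lambda=0$ is thus a semisimple double eigenvalue with eigenspace $\mathrm{span}\{e^{iz},e^{-iz}\}$, isolated from the rest of the discrete spectrum. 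Since $\L_{\gamma,a}$ is $\L_{0,0}$ plus a bounded perturbation depending analytically on $(a,\gamma)$, Kato's analytic perturbation theory yields, for small $(a,\gamma)$, a rank-two spectral projection analytic in $(a,\gamma)$; the part of $\mathrm{spec}(\L_{\gamma,a})$ near $0$ is then the spectrum of a $2\times2$ matrix $\mathcal M(a,\gamma)$, analytic in $(a,\gamma)$, with $\mathcal M(0,\gamma)=\mathrm{diag}\big(\tfrac{3i\gamma^2}{k},-\tfrac{3i\gamma^2}{k}\big)$.

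The crux is the expansion of $\mathcal M(a,\gamma)$. Its $O(a)$ and $O(a\gamma)$ contributions vanish: the first harmonic $\cos z$ of $w$, and the operator $w_z\,\partial_z^{-1}$ (leading part $\propto a\sin z\,\partial_z^{-1}$), couple $e^{\pm iz}$ only to $e^{\pm2iz}$ (and the latter also to the constant, which is absent from the space), so they create neither a matrix element inside $\mathrm{span}\{e^{iz},e^{-iz}\}$ nor an off-diagonal coupling through an intermediate mode. The first nontrivial terms occur at $O(a^2)$: the off-diagonal entries come from the $\cos2z$ harmonics of $w$ (coefficient $a^2A_2$) and of $w^2$ (the $\tfrac12\cos2z$ part of $a^2\cos^2z$) and equal $\pm ia^2\tilde\kappa$, $\tilde\kappa:=\tfrac{3k}{2}\big(\tfrac{\phi^2}{4}-\tfrac{\rho^2}{k^2}\big)$, while the diagonal $O(a^2)$ entries, which collect the speed correction $a^2c_2$, the mean $a^2A_0$ of $w$, the mean part of $w^2$, and the second-order (resolvent) contribution through $e^{\pm2iz}$, simplify — using the explicit $A_0,A_2,c_2$ of \eqref{e:expptw} — to $\mp ia^2\tilde\kappa$. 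Thus
\[
\mathcal M(a,\gamma)=i\begin{pmatrix}\ \dfrac{3\gamma^2}{k}-a^2\tilde\kappa & -a^2\tilde\kappa\ \\[2mm] a^2\tilde\kappa & -\Big(\dfrac{3\gamma^2}{k}-a^2\tilde\kappa\Big)\end{pmatrix}+\big(\text{higher order in }a,\gamma\big).
\]

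The eigenvalues of this matrix are $\pm i\big[(\tfrac{3\gamma^2}{k}-a^2\tilde\kappa)^2-a^4\tilde\kappa^2\big]^{1/2}$, with nonzero real part exactly when $(\tfrac{3\gamma^2}{k}-a^2\tilde\kappa)^2<a^4\tilde\kappa^2$; since $\gamma^2\ge0$ this holds iff $\tilde\kappa>0$ and $0<\tfrac{3\gamma^2}{k}<2a^2\tilde\kappa$, i.e. iff $\tfrac{\phi^2}{4}>\tfrac{\rho^2}{k^2}$ (equivalently $k>2|\rho/\phi|$) and $0<|\gamma|<k|a|\sqrt{\tfrac{\phi^2}{4}-\tfrac{\rho^2}{k^2}}$. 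The neglected higher-order terms shift this threshold by $O(a(\gamma+a))$, and the strict inequalities ensure the eigenvalue with positive real part survives; such an eigenvalue lies in the spectrum of the linearization on co-periodic, $e^{i\gamma y}$-transverse perturbations, giving transverse spectral instability. The step I expect to be the main obstacle is this $O(a^2)$ reduction: because the $O(a)$ and $O(a\gamma)$ matrix elements vanish, the perturbation must be carried to second order — which forces the resolvent contribution through the non-resonant modes $e^{\pm2iz}$ in addition to the direct $O(a^2)$ terms — and the destabilizing off-diagonal term $\pm ia^2\tilde\kappa$ would be overwhelmed without the exact cancellation that makes the $O(a^2)$ diagonal equal to $\mp ia^2\tilde\kappa$, whose verification rests on the precise coefficients of \eqref{e:expptw}. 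Secondary technical points are the careful treatment of the mean-zero constraint produced by $\partial_x^{-1}$ (so that $\L_{\gamma,a}$ is a genuine bounded perturbation of a closed operator with compact resolvent) and the passage from "$\mathcal M(a,\gamma)$ has an eigenvalue off the imaginary axis" to transverse spectral instability in the sense of Theorem~\ref{t:3}.
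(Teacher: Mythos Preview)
Your argument is correct and is essentially the paper's own: linearize, restrict to the mean-zero space $L^2_0(\mathbb T)$, observe that at $(a,\gamma)=(0,0)$ the zero eigenvalue is double with eigenspace $\mathrm{span}\{e^{\pm iz}\}$, reduce to a $2\times2$ matrix, and read off the instability condition $\gamma^2<a^2k^2\big(\tfrac{\phi^2}{4}-\tfrac{\rho^2}{k^2}\big)$. The only cosmetic differences are that the paper works in the real basis $\{\cos z,\sin z\}$ with explicitly expanded approximate eigenfunctions $\psi_1,\psi_2$ (so the resulting matrix has zero diagonal), whereas you use the complex basis with a Kato spectral projection and carry the resolvent contribution through $e^{\pm 2iz}$ explicitly; the two $2\times2$ matrices are similar and give the same characteristic polynomial.
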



As a consequence of these theorems, for all $k>0$, periodic traveling waves \eqref{e:expptw} of the mKP-II equation suffers modulational transverse instability with respect to periodic perturbations, which is in accordance with results in  \cite{Johnson2010TransverseEquation}.
Also, in the limit $\phi\to 0$, there is no modulational transverse instability for KP-II equation by Theorem~\ref{t:2} which again agrees with results in \cite{Spektor1988StabilityDispersion,Haragus2011TransverseEquation,Johnson2010TransverseEquation,HLP17,Bhavna2021TransverseEquation}. From Theorem \ref{t:3}, KP-II does not possess any high-frequency transverse instability \cite{HLP17}. Moreover, we have the following stability result for the mKP-II equation using Theorem~\ref{t:3}.  

\begin{corollary}
For all $k>0$, sufficiently small
amplitude periodic traveling waves \eqref{e:expptw} of the mKP-II equation does not possess any high-frequency transverse instability with respect to both mean-zero periodic and non-periodic perturbations.
\end{corollary}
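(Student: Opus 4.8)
The plan is to obtain the corollary by applying Theorem~\ref{t:3} in the case $\rho=0$, so that the only real task is to verify that theorem's hypothesis. The mKP-II equation \eqref{e:mKP} is \eqref{e:KD} with $\rho=0$, its $y$-independent one-dimensional reduction is the modified Korteweg--de Vries equation $u_t-u_{xxx}+\tfrac{3}{2}\phi^2u^2u_x=0$ (the Gardner equation \eqref{e:gar} with $\rho=0$), and the corresponding small-amplitude $2\pi$-periodic traveling waves are \eqref{e:expptw} specialized to $\rho=0$. Thus, by Theorem~\ref{t:3}, it is enough to prove that for every $k>0$, every $\phi\neq0$, and $|a|,|b_1|,|b_2|$ sufficiently small, these waves are spectrally stable in $L^2(\mathbb T)$ as solutions of this mKdV equation; Theorem~\ref{t:3} then delivers transverse stability with respect to perturbations that are finite- or short-wavelength in the transverse variable and either mean-zero periodic or non-periodic in the propagation direction, i.e. exactly the absence of high-frequency transverse instability claimed.

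To verify the $L^2(\mathbb T)$-spectral stability I would move to the frame $z=k(x-ct)$, linearize about $w=w(a,b_1,b_2)$, and write the linearized operator in Hamiltonian form $\mathcal L_a=J\,\mathcal H_a$ on $L^2(\mathbb T)$, with $J=-k\Dz$ skew-adjoint and $\mathcal H_a=-k^2\Dz^2-c(a)+\tfrac{3}{2}\phi^2 w^2+3\phi b_1$ self-adjoint with real, even coefficients. Because $\mathcal L_a$ has real coefficients and this Hamiltonian structure, its $L^2(\mathbb T)$-spectrum is invariant under $\lambda\mapsto\bar\lambda$ and under $\lambda\mapsto-\bar\lambda$, hence under $\lambda\mapsto-\lambda$. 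At $a=b_1=b_2=0$ one has $c=k^2$ and the constant-coefficient operator $\mathcal L_0=k^3(\Dz+\Dz^3)$, whose $L^2(\mathbb T)$-spectrum is purely imaginary and semisimple: the eigenvalues $ik^3n(1-n^2)$ with $|n|\geq2$ are simple and mutually distinct, while the Fourier modes $n=0,\pm1$ contribute a single semisimple eigenvalue of algebraic multiplicity three at the origin. For small parameters, every simple, isolated, purely imaginary eigenvalue of $\mathcal L_a$ must remain on the imaginary axis, since its conjugate and its minus-conjugate are again eigenvalues and simplicity forces $\lambda=-\bar\lambda$; hence the only possible source of instability on $L^2(\mathbb T)$ is a collision at the triple eigenvalue $\lambda=0$.

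To control the origin I would use the conserved quantities of mKdV, as encoded in derivatives of the profile equation \eqref{e:pt1} (with $\rho=0$, in the $z$-variable): differentiating in $z$ gives $\mathcal H_a w_z=0$, and differentiating along the solution family in $a$ gives $\mathcal H_a w_a=c_a\,w$; since $w$ is even, $\ker\mathcal H_a=\operatorname{span}\{w_z\}$ is one-dimensional and $1\perp w_z$, so $\psi_1:=\mathcal H_a^{-1}1$ is well defined. Then $\mathcal L_a w_z=0$, $\mathcal L_a\psi_1=-k\Dz1=0$ and $\mathcal L_a w_a=-k\,c_a\,w_z$, so $w_z,\psi_1,w_a$ all lie in the generalized kernel of $\mathcal L_a$ at $\lambda=0$, and their leading-order Fourier content ($w_z\sim-a\sin z$, $\psi_1\sim-k^{-2}$, $w_a\sim\cos z$) makes them linearly independent for small $a\neq0$. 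By analytic perturbation theory the total algebraic multiplicity of $\sigma_{L^2(\mathbb T)}(\mathcal L_a)$ inside a fixed small circle about $0$ is locally constant in the parameters, hence equal to its value three at $a=b_1=b_2=0$; combined with the three independent generalized eigenvectors exhibited at $\lambda=0$ itself, this forces that multiplicity to be exactly three and entirely located at $\lambda=0$. Therefore $\sigma_{L^2(\mathbb T)}(\mathcal L_a)\subset i\mathbb R$, the waves are spectrally stable in $L^2(\mathbb T)$, and the corollary follows from Theorem~\ref{t:3}. I expect the delicate point to be exactly this last one — excluding escape of eigenvalues from the triple collision at the origin — which I would handle via the explicit generalized-kernel elements above and the multiplicity count, rather than a direct small-$a$ expansion (which is singular at $a=0$); this is consistent with, indeed a special case of, the small-amplitude analysis of gKdV/Gardner-type waves underlying \cite{Bronski2016ModulationalType}.
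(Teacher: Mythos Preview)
Your proposal is correct, but you carry out substantially more work than the paper does. In the paper the corollary is obtained simply by specializing Theorem~\ref{t:3} --- equivalently, the perturbation computation of Section~\ref{s:5} --- to $\rho=0$: the discriminant analysis there shows, for \emph{every} $\rho,\phi,k$, that no eigenvalue of $\mathcal H_a(\gamma)$ or $\mathcal H_a(\gamma,\tau)$ leaves the imaginary axis at any of the $\Delta\geq 2$ collisions, and those collisions are the only possible sources of instability in the finite/short-wavelength regime $|\gamma|>|\gamma_0|$. No separate verification of the one-dimensional $L^2(\mathbb T)$ stability hypothesis for mKdV is given; it is either absorbed into the paper's definition of transverse stability or tacitly taken from the cited gKdV/Gardner literature.

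Your route differs in that you supply an explicit, self-contained proof of that hypothesis: the fourfold spectral symmetry of $\mathcal L_a=J\mathcal H_a$ (which you can also get directly from reality together with the parity $z\mapsto -z$, bypassing the issue that $J=-k\partial_z$ is not invertible on $L^2(\mathbb T)$) pins each simple eigenvalue to $i\mathbb R$, and the three generalized-kernel elements $w_z,\ \psi_1=\mathcal H_a^{-1}1,\ w_a$ together with conservation of total algebraic multiplicity confine the triple collision to the origin. This is a clean and valid argument --- essentially the small-amplitude instance of the Bronski--Johnson--Kapitula framework you cite --- and it has the merit of making the invocation of Theorem~\ref{t:3} fully justified without appeal to outside references. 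The paper's approach is shorter only because the Section~\ref{s:5} calculation was already done uniformly in $\rho$, so the specialization $\rho=0$ requires nothing new.
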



In Section~\ref{sec:lin}, we linearize the equation and formulate the problem. In Section~\ref{s:3}, we list all potentially unstable nodes. In Sections \ref{s:4} and \ref{s:5}, we provide transverse instability analysis to investigate modulational and high-frequency transverse instabilities with respect to periodic and non-periodic perturbations. 

\subsection*{Notations}\label{sec:notations}
Throughout the article, we have used the following notations. 
Here, $L^{2}(\mathbb{R})$ is the set of Lebesgue measurable, real, or complex-valued functions over $\mathbb{R}$ such that
$$
\|f\|_{L^{2}(\mathbb{R})}=\left(\int_{\mathbb{R}}|f(x)|^{2} d x\right)^{1 / 2}<+\infty,
$$
and, $L^{2}(\mathbb{T})$ denote the space of $2 \pi$-periodic, measurable, real or complex-valued functions over $\mathbb{R}$ such that
$$
\|f\|_{L^{2}(\mathbb{T})}=\left(\frac{1}{2 \pi} \int_{0}^{2 \pi}|f(x)|^{2} d x\right)^{1 / 2}<+\infty.
 $$
 Here, $L^2_0(\mathbb{T})$ is the space of square-integrable functions with of zero-mean, 
\begin{equation}\label{e:zerom}
    L^2_0(\T)=\left\{f\in L^2(\T)\;:\;\int_0^{2\pi}f(z)~dz=0\right\}.
\end{equation}
The space $C_{b}(\mathbb{R})$ consists of all bounded continuous functions on $\mathbb{R}$, normed with
$$
\|f\|=\sup _{x \in \mathbb{R}}|f(x)|.
$$
For $s \in \mathbb{R}$, let $H^{s}(\mathbb{R})$ consists of tempered distributions such that
$$
\|f\|_{H^{s}(\mathbb{R})}=\left(\int_{\mathbb{R}}\left(1+|t|^{2}\right)^{s}|\hat{f}(t)|^{2} d t\right)^{\frac{1}{2}}<+\infty,
$$
and
$$
H^{s}(\mathbb{T})=\left\{f \in H^{s}(\mathbb{R}): f \text { is } 2 \pi \text {-periodic }\right\}.
$$
We define $L^{2}(\mathbb{T})$-inner product as
$$
\langle f, g\rangle=\frac{1}{2 \pi} \int_{0}^{2 \pi} f(z) \bar{g}(z) d z=\sum_{n \in \mathbf{Z}} \hat{f}_{n} \overline{\hat{g}_n},
$$
where $\widehat{f}_{n}$ are Fourier coefficients of the function $f$ defined by
$$
\widehat{f}_{n}=\frac{1}{2 \pi} \int_{0}^{2 \pi} f(z) e^{i n z} d z.
$$
Throughout the article, $\Re(\mu)$ represents the real part of $\mu\in\mathbb{C}$.
\section{Linearization and the spectral problem set up}\label{sec:lin}
Linearizing \eqref{e:KD} about its one-dimensional periodic traveling wave solution $\begin{pmatrix}w\\v\end{pmatrix}$ given in \eqref{e:expptw}, and considering the perturbations to $\begin{pmatrix}w\\v\end{pmatrix}$ of the form
\begin{equation}
    \begin{pmatrix}w\\v\end{pmatrix}+\epsilon\begin{pmatrix}\zeta\\\psi\end{pmatrix}+O(\epsilon^2)\quad\text{for}\quad 0<|\epsilon|\ll 1
\end{equation}
we arrive at, 
\begin{equation}\label{e:lin1}
\centering
\left\{\begin{aligned}
    \zeta_t-kc\zeta_z-k^3\zeta_{zzz}-6k\rho(w\zeta)_z+\dfrac{3}{2}\phi^2k(w^2\zeta)_z-3\psi_y+3\phi k w_z\psi+3\phi kb_1\zeta_z&=0,\\
  \zeta_y-k\psi_z&=0.
    \end{aligned}\right.
\end{equation}
We seek a solution of the form $\begin{pmatrix}\zeta(z,t,y)\\ \psi(z,t,y) \end{pmatrix}=e^{\mu t+i\gamma y} \begin{pmatrix}\zeta(z)\\ \psi(z) \end{pmatrix}$, $\mu\in\mathbb{C}$, $\gamma\in\mathbb{R}$, of \eqref{e:lin1}, which leads to
\begin{equation}\label{e:lin2}
\centering
\left\{\begin{aligned}
    \mu\zeta-kc\zeta_z-k^3\zeta_{zzz}-6k\rho(w\zeta)_z+\dfrac{3}{2}\phi^2k(w^2\zeta)_z-3i\gamma\psi+3\phi k w_z\psi+3\phi kb_1\zeta_z&=0,\\
    i\gamma\zeta-k\psi_z&=0.
    \end{aligned}\right.
\end{equation}
We can reduce this system of equations into 
\begin{equation}
\begin{aligned}\label{e:opt}
   &\mathcal Q_{a,b_1,b_2}(\mu,\gamma)\psi:=\\& \left(k\left(\mu-kc\partial_z-k^3\partial_z^3-6k\rho\partial_z(w\cdot)+\dfrac{3}{2}\phi^2k\partial_z(w^2\cdot)\right)\partial_z+3\gamma^2+3\phi k(i\gamma w_z+kb_1\partial_z^2)\right)\psi=0.
\end{aligned}\end{equation}
\begin{definition}(Transverse (in)stability)
Assuming that $2\pi/k$-periodic traveling wave solution $\begin{pmatrix}u(x,y,t\\v(x,y,t\end{pmatrix}=\begin{pmatrix}w(k(x-ct))\\v(k(x-ct))\end{pmatrix}$ of \eqref{e:KD} is a stable solution of the one-dimensional equation \eqref{e:gar} where $w$, $v$ and $c$ are as in \eqref{e:expptw}, we say that the periodic wave $\begin{pmatrix}w\\v\end{pmatrix}$ in \eqref{e:expptw} is transversely spectrally stable with respect to two-dimensional periodic perturbations (resp. non-periodic (localized or bounded perturbations)) if the KD operator $\mathcal Q_{a,b_1,b_2}(\mu,\gamma)$ acting in $L^2(\T)$ (resp.  $L^2(\R)$ or $C_b(\R)$) is invertible, for any $\mu\in\C$, $\Re(\mu)>0$ and any $\gamma\neq 0$ otherwise it is deemed transversely spectrally unstable. 
\end{definition}

We split the study of invertibility of $\mathcal Q_{a,b_1,b_2}(\mu,\gamma)$ into periodic ($L^2(\T)$) and non-periodic perturbations ($L^2(\R)$ or $C_b(\R)$). In further study, we assume $b_1=b_2=0$. For nonzero $b_1$ and $b_2$, one may explore in like manner. However, the
calculation becomes lengthy and tedious.
\subsection*{Periodic perturbations}\label{s:per}
Here, we are considering perturbations which are periodic in $z$, that is, in the direction of the propagation of wave. We check  the invertibility of the operator $\mathcal Q_{a,b_1,b_2}(\mu,\gamma)$ acting in $L^2(\T)$ for any $\mu\in\C$, $\Re(\mu)>0$ and any $\gamma\neq 0$. We use the notation $\mathcal Q_{a}(\mu,\gamma)$ for $\mathcal Q_{a,b_1,b_2}(\mu,\gamma)$ for simplicity. We convert the invertibility problem
\[\mathcal Q_{a}(\mu,\gamma)\psi=0;\quad \psi\in L^2(\T)\] 
into a spectral problem which requires invertibility of $\partial_z$. Since $\partial_z$ is not invertible in $L^2(\mathbb{T})$, we restrict the problem to mean-zero subspace $L^2_0(\mathbb{T})$, defined in \eqref{e:zerom}, of $L^2(\mathbb{T})$. Since $L_0^2(\T)\subset L^2(\T)$, if the operator $\mathcal{Q}_a(\mu,\gamma)$ is not invertible for some $\mu\in\C$ in $L^2_0(\T)$ implies that the operator $\mathcal{Q}_a(\mu,\gamma)$ is not invertible in $L^2(\T)$ as well for the same $\mu\in\C$.
The operator $\mathcal Q_{a}(\mu,\gamma)$ acting on $L^2_0(\mathbb{T})$ has a compact resolvent so that the spectrum consists of isolated eigenvalues with finite multiplicity. Therefore, $\mathcal Q_{a}(\mu,\gamma)$ is invertible in $L^2_0(\mathbb{T})$ if and only if zero is not an eigenvalue of $\mathcal Q_{a}(\mu,\gamma)$. Using this and the invertibility of $\partial_z$ in $L_0^2(\mathbb{T})$, we have the following result.

\begin{lemma}\label{lem:mh}The operator $\mathcal Q_{a}(\mu,\gamma)$ is not invertible in $L^2_0(\mathbb{T})$ for some $\mu\in \C$ if and only if  $\mu\in\operatorname{spec}_{L^2_0(\mathbb{T})}(\mathcal H_{a}(\gamma))$, that is, $L_0^2(\mathbb{T})$-spectrum of the operator, where
\begin{align*}
	\mathcal H_{a}(\gamma):= ck \partial_z+k^{3} \partial_{z}^{3}+6 k \rho \partial_z(w)-\frac{3}{2} \phi^{2} k \partial_z\left(w^{2}\right)-\frac{3 \gamma^{2}}{k} \partial_z^{-1} -i 3\phi \gamma w_{z} \partial_z^{-1}.
\end{align*}
\end{lemma}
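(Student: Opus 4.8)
The plan is to exhibit $\M_a(\mu,\gamma)$ as a composition with the shift operator $\Dz$ peeled off on the right, namely $\M_a(\mu,\gamma)=-k\bigl(\H_a(\gamma)-\mu\bigr)\Dz$ on $L^2_0(\T)$, after which the equivalence of (non)invertibility with membership of $\mu$ in the $L^2_0(\T)$-spectrum of $\H_a(\gamma)$ is immediate. First I would note that, with $b_1=b_2=0$, every term of $\M_a(\mu,\gamma)\psi$ in \eqref{e:opt} either carries an explicit $\Dz$ acting on $\psi$ from the right — the block $k\bigl(\mu-kc\Dz-k^3\Dz^3-6k\rho\Dz(w\,\cdot)+\tfrac32\phi^2k\Dz(w^2\,\cdot)\bigr)\Dz\psi$ — or is a multiplication operator applied to $\psi$ directly, namely $\bigl(3\gamma^2+3i\phi k\gamma w_z\bigr)\psi$. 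On $L^2_0(\T)$ the operator $\Dz$ is invertible: it maps $H^{s+1}(\T)\cap L^2_0(\T)$ bijectively onto $H^{s}(\T)\cap L^2_0(\T)$ (the image lies in $L^2_0(\T)$ because $\int_0^{2\pi}\Dz\psi\,dz=0$ by periodicity), with bounded — indeed compact — inverse $\Dz^{-1}$ satisfying $\Dz^{-1}\Dz\psi=\psi$ for $\psi\in L^2_0(\T)$. Writing $\eta:=\Dz\psi$ and $\psi=\Dz^{-1}\eta$ in the two multiplication terms, dividing through by $k$, and collecting, a direct computation should yield
\[
\M_a(\mu,\gamma)\psi=-k\bigl(\H_a(\gamma)-\mu\bigr)\Dz\psi,\qquad \psi\in H^4(\T)\cap L^2_0(\T),
\]
with $\H_a(\gamma)$ exactly the operator in the statement; in particular the two smoothing terms $-\tfrac{3\gamma^2}{k}\Dz^{-1}$ and $-3i\phi\gamma w_z\Dz^{-1}$ of $\H_a(\gamma)$ are precisely what the multiplication part $\bigl(\tfrac{3\gamma^2}{k}+3i\phi\gamma w_z\bigr)\Dz^{-1}\eta$ contributes.

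With this factorization the conclusion follows at once. Since $\Dz:H^4(\T)\cap L^2_0(\T)\to H^3(\T)\cap L^2_0(\T)$ is a bounded bijection with bounded inverse, the fourth-order operator $\M_a(\mu,\gamma)$ (domain $H^4(\T)\cap L^2_0(\T)$) is invertible on $L^2_0(\T)$ if and only if the third-order operator $\H_a(\gamma)-\mu$ (domain $H^3(\T)\cap L^2_0(\T)$; the $\Dz^{-1}$ pieces are smoothing, hence relatively compact perturbations of the ODE part) is invertible on $L^2_0(\T)$, i.e.\ if and only if $\mu\notin\operatorname{spec}_{L^2_0(\T)}(\H_a(\gamma))$. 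Alternatively, invoking that $\M_a(\mu,\gamma)$ has compact resolvent (as already recorded in the excerpt) and that the third-order operator $\H_a(\gamma)$ likewise has compact resolvent, one may phrase the equivalence through kernels: $\M_a(\mu,\gamma)$ is not invertible iff $0$ is one of its eigenvalues iff $\ker\M_a(\mu,\gamma)\neq\{0\}$, and the map $\psi\mapsto\Dz\psi$ is a bijection of $\ker\M_a(\mu,\gamma)$ onto $\ker(\H_a(\gamma)-\mu)$ with inverse $\eta\mapsto\Dz^{-1}\eta$, so the latter kernel is nontrivial precisely when $\mu\in\operatorname{spec}_{L^2_0(\T)}(\H_a(\gamma))$.

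I expect the only real work here to be bookkeeping rather than analysis. One must keep track of the Sobolev domains so that the composition of the individually unbounded operators $\H_a(\gamma)-\mu$ and $\Dz$ is legitimate, and one must use the mean-zero restriction in exactly two places: to guarantee $\Dz^{-1}\Dz\psi=\psi$, and to guarantee that $\eta=\Dz\psi$ again belongs to $L^2_0(\T)$ so that $\Dz^{-1}\eta$ makes sense. The overall sign and the harmless factor $-k$ drop out since $k>0$. Beyond the elementary fact that $\Dz$ is boundedly invertible on $L^2_0(\T)$ — which the excerpt has already pointed to — no estimate is needed.
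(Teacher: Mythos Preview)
Your proof is correct and follows essentially the same approach as the paper: both reduce non-invertibility of $\M_a(\mu,\gamma)$ to the eigenvalue equation for $\H_a(\gamma)$ via the invertibility of $\Dz$ on $L^2_0(\T)$ together with the compact-resolvent reduction to kernels. Your version is in fact more detailed than the paper's two-line argument, making the factorization $\M_a(\mu,\gamma)=-k(\H_a(\gamma)-\mu)\Dz$ explicit and tracking the domains and the kernel bijection $\psi\mapsto\Dz\psi$ carefully.
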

\begin{proof}
The operator $\mathcal Q_{a}(\mu,\gamma)$ is not invertible in $L_0^2(\mathbb{T})$ for some $\mu\in \C$, if and only if zero is an eigenvalue of $\mathcal Q_{a}(\mu,\gamma)$. Moreover, for a $\varphi\in L_0^2(\mathbb{T})$, $\mathcal Q_{a}(\mu,\gamma)\varphi=0$ if and only if $\mathcal H_{a}(\gamma)\varphi=\mu \varphi$. The proof follows trivially.
\end{proof}

Next, we analyze the spectrum of the operator $\mathcal H_{a}(\gamma)$ acting in  $L^2_0(\T)$ with domain $H^{3}(\T)\cap L^2_0(\T)$. Since $w$ is an even function, $w_z$ is an odd function and therefore, the spectrum of $\mathcal{H}_a(\gamma)$ is not symmetric with respect to the reflection through the origin. Moreover, the operator $\mathcal{H}_a(\gamma)$ is not real, therefore the spectrum of $\mathcal{H}_a(\gamma)$ is not symmetric with respect to the real axis as well. The spectrum of $\mathcal{H}_a(\gamma)$ inherits following symmetry property.
\begin{lemma}\label{lem:sym}
The spectrum of $\mathcal{H}_a(\gamma)$ is symmetric with respect to the reflection through the imaginary axis.
\end{lemma}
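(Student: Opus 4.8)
The plan is to exhibit an explicit anti-linear (or linear, combined with complex conjugation) involution on $L^2_0(\T)$ that conjugates $\mathcal{H}_a(\gamma)$ to $-\overline{\mathcal{H}_a(\gamma)}$, from which symmetry of the spectrum under reflection through the imaginary axis follows immediately: if $\mu\in\operatorname{spec}(\mathcal{H}_a(\gamma))$ then $-\bar\mu\in\operatorname{spec}(\mathcal{H}_a(\gamma))$, and $-\bar\mu$ is precisely the reflection of $\mu$ across the imaginary axis. The natural candidate is the reflection $z\mapsto -z$, i.e. the operator $(Rf)(z)=f(-z)$, possibly composed with complex conjugation. First I would record how each term of $\mathcal{H}_a(\gamma)$ transforms under $R$. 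Since $w$ is even, $w(-z)=w(z)$, so multiplication by $w$ and by $w^2$ commutes with $R$; since $\partial_z$ anti-commutes with $R$ (i.e. $R\partial_z = -\partial_z R$), the same holds for $\partial_z^3$ and for $\partial_z^{-1}$ on $L^2_0(\T)$; and $w_z$ is odd, so $R(w_z\,\cdot) = -w_z\,R(\cdot)$.

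Next I would assemble these to see the effect on the five terms of
\[
\mathcal{H}_a(\gamma)= ck\partial_z + k^3\partial_z^3 + 6k\rho\,\partial_z(w\,\cdot) - \tfrac32\phi^2 k\,\partial_z(w^2\,\cdot) - \tfrac{3\gamma^2}{k}\partial_z^{-1} - 3i\phi\gamma\, w_z\,\partial_z^{-1}.
\]
The first four terms each pick up a single sign change from the odd number of $\partial_z$ factors (the even multiplication operators pass through), so $R$ conjugates $ck\partial_z + k^3\partial_z^3 + 6k\rho\,\partial_z(w\,\cdot) - \tfrac32\phi^2k\,\partial_z(w^2\,\cdot)$ to its negative. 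The term $-\tfrac{3\gamma^2}{k}\partial_z^{-1}$ likewise changes sign. The last term $-3i\phi\gamma\, w_z\,\partial_z^{-1}$ carries \emph{two} sign changes (one from $w_z$ odd, one from $\partial_z^{-1}$), so it is left invariant by $R$ — but the imaginary unit $i$ means that conjugation by $R$ alone does not send $\mathcal{H}_a(\gamma)$ to $-\mathcal{H}_a(\gamma)$. This is the key obstruction, and it is resolved by also applying complex conjugation: composing $R$ with $f\mapsto\bar f$ flips the sign of the $i$ in the last term, so that term \emph{also} ends up negated. Hence, setting $\mathcal{S}f := \overline{f(-\cdot)}$, one checks $\mathcal{S}\,\mathcal{H}_a(\gamma)\,\mathcal{S}^{-1} = -\mathcal{H}_a(\gamma)$ (note $\mathcal{S}$ is its own inverse).

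From this conjugation identity the spectral statement is routine: $\mathcal{S}$ is an anti-linear bijection of $L^2_0(\T)$ preserving the domain $H^3(\T)\cap L^2_0(\T)$, so $\mathcal{H}_a(\gamma) - \mu I$ is invertible if and only if $\mathcal{S}(\mathcal{H}_a(\gamma)-\mu I)\mathcal{S}^{-1} = -\mathcal{H}_a(\gamma) - \bar\mu I = -(\mathcal{H}_a(\gamma) + \bar\mu I)$ is invertible, i.e. if and only if $-\bar\mu\notin\operatorname{spec}(\mathcal{H}_a(\gamma))$. Therefore $\mu\in\operatorname{spec}(\mathcal{H}_a(\gamma))$ iff $-\bar\mu\in\operatorname{spec}(\mathcal{H}_a(\gamma))$, which is exactly symmetry of the spectrum under reflection through the imaginary axis. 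The main thing to be careful about is the interplay of the anti-linearity of $\mathcal{S}$ with the spectral parameter (one gets $-\bar\mu$, not $-\mu$), and verifying that $\partial_z^{-1}$ genuinely anti-commutes with $R$ on the mean-zero subspace — both are quick checks on Fourier modes $e^{inz}$, since $R e^{inz} = e^{-inz}$ and $\partial_z^{-1} e^{inz} = (in)^{-1} e^{inz}$ for $n\neq 0$. I would present the computation termwise on the Fourier side as the cleanest route, then state the conjugation identity and deduce the lemma.
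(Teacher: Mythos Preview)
Your approach is correct and essentially identical to the paper's: the paper introduces the same anti-linear involution $\mathcal{R}\psi(z)=\overline{\psi(-z)}$ (your $\mathcal{S}$), verifies that $\mathcal{H}_a(\gamma)$ anti-commutes with it, and concludes that $\mu$ is an eigenvalue iff $-\bar\mu$ is. Your write-up is in fact more detailed in checking the term-by-term transformation (in particular the role of the $i$ in the $w_z\partial_z^{-1}$ term), which the paper leaves implicit.
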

\begin{proof}
We consider $\mathcal{R}$ to be the reflection through the imaginary axis defined as follows
\begin{equation*}
    \mathcal{R}\psi(z)=\overline{\psi(-z)}
\end{equation*}
Assume $\mu$ is the eigenvalue of $\mathcal{H}_a(\gamma)$ with an associated eigenvector $\varphi$, then we have
\begin{equation}\label{e:eig}
    \mathcal{H}_a(\gamma)\varphi=\mu\varphi
\end{equation}
Since
\[(\mathcal{H}_a(\gamma)\mathcal{R}\psi)(z)=\mathcal{H}_a(\gamma)(\mathcal{R}\psi(z))=\mathcal{H}_a(\gamma)\overline{\psi(-z)}=-(\overline{\mathcal{H}_a(\gamma)\psi})(-z)=-(\mathcal{R}\mathcal{H}_a(\gamma)\psi)(z),
\]
therefore, $\mathcal{H}_a(\gamma)$ anti-commutes with $\mathcal{R}$. Using \eqref{e:eig}, we arrive at
\[\mathcal{H}_a(\gamma)\mathcal{R}\varphi=-\mathcal{R}\mathcal{H}_a(\gamma)\varphi=-\overline{\mu}\mathcal{R}\varphi
\]
We conclude from here that if $\mu$ is an eigenvalue of $\mathcal{H}_a(\gamma)$ with associated eigenvector $\varphi$, then $-\overline{\mu}$ is also an eigenvalue of $\mathcal{H}_a(\gamma)$ with associated eigenvector $\mathcal{R}\varphi.$
\end{proof}
\subsection*{Non-periodic perturbations}\label{s:nper}
With respect to these perturbations, we aim to study the invertibility of $\mathcal Q_a(\mu, \gamma)$ acting in $L^2(\R)$ or $C_b(\R)$ (with domain $H^{4}(\R)$ or   $C_b^{4}(\R)$),
for $\mu\in\C$, $\Re(\mu)>0$, and $\gamma\in\R$, $\gamma\neq0$. In $L^2(\R)$ or $C_b(\R)$, the operator $\mathcal Q_a(\mu, \gamma)$ no longer have point isolated spectrum, rather it have continuous spectrum. Thus, we rely upon the Floquet Theory such that all solutions of \eqref{e:opt} in $L^2(\R)$ or $C_b(\R)$ are of the form $\psi(z)=e^{i\tau z}\Psi(z)$ where $\tau\in\left(-\frac12,\frac12\right]$ is the Floquet exponent and $\Psi(z)$ is a $2\pi$-periodic function, see \cite{Haragus2008STABILITYEQUATION} for a similar situation. By following same arguments as in the proof of \cite[Proposition A.1]{Haragus2008STABILITYEQUATION}, we can infer that the study of the invertibility of $\mathcal Q_a(\mu,\gamma)$ in $L^2(\R)$ or $C_b(\R)$ is equivalent to the invertibility of the linear operators $\mathcal Q_{a,\tau}(\mu,\gamma)$ in $L^2(\mathbb{T})$ with domain $H^{4}(\mathbb{T})$, for all $\tau\in\left(-\frac12,\frac12\right]$, where
\begin{align*}
\mathcal Q_{a,\tau}(\mu,\gamma) = \left(\mu-kc(\partial_z+i\tau)-k^3(\partial_z+i\tau)^3-6k\rho(\partial_z+i\tau)(w)\right)(k(\partial_z+i\tau))\\+\left(\dfrac{3}{2}\phi^2k(\partial_z+i\tau)(w^2)\right)(k(\partial_z+i\tau))+3\gamma^2+i3k\gamma\phi w_z.
\end{align*}

Since $\tau=0$ corresponds to the periodic perturbations we have already investigated, we would now restrict ourselves to the case of $\tau\neq0$. The $L^2(\mathbb{T})$-spectra of operator $\mathcal Q_{a,\tau}(\mu,\gamma)$ consist of eigenvalues of finite multiplicity. Therefore, $\mathcal Q_{a,\tau}(\mu,\gamma)$ is invertible in $L^2(\mathbb{T})$ if and only if zero is not an eigenvalue of $\mathcal Q_{a,\tau}(\mu,\gamma)$. We have the following result using this and the invertibility of $\partial_z+i\tau$.
\begin{lemma} The operator $\mathcal Q_{a,\tau}(\mu,\gamma)$ is not invertible in $L^2(\mathbb{T})$ for some $\mu\in \C$ and $\tau\neq 0$ if and only if  $\mu\in\phi(\mathcal H_a(\gamma,\tau)))$, $L^2(\mathbb{T})$-spectrum of the operator,
\begin{align*}
	\mathcal{H}_a(\gamma,\tau):=k c (\partial_z+i\tau)&+k^{3} (\partial_z+i\tau)^{3}+6 k \rho (\partial_z+i\tau)(w)\\&-\frac{3}{2} \phi^{2} k (\partial_z+i\tau)\left(w^{2}\right)-\left(\dfrac{3 \gamma^{2}}{k} +i 3\phi \gamma w_{z}\right) (\partial_z+i\tau)^{-1}.\\
\end{align*}
\end{lemma}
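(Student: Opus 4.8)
The plan is to follow the argument of Lemma~\ref{lem:mh} almost verbatim; the only genuinely new point is that for $\tau\neq0$ the operator $\partial_z+i\tau$ is invertible on \emph{all} of $L^2(\mathbb T)$, so no passage to a mean-zero subspace is required. The first step is to record this invertibility: acting on $L^2(\mathbb T)$ with domain $H^1(\mathbb T)$, the operator $\partial_z+i\tau$ is diagonal in the Fourier basis $\{e^{inz}\}_{n\in\mathbb Z}$ with eigenvalues $i(n+\tau)$, and since $\tau\in(-\tfrac12,\tfrac12]\setminus\{0\}$ we have $n+\tau\neq0$ for every $n\in\mathbb Z$; hence $(\partial_z+i\tau)^{-1}$ is bounded on $L^2(\mathbb T)$ and maps $H^s(\mathbb T)$ bijectively onto $H^{s+1}(\mathbb T)$.

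The second step reduces non-invertibility of $\mathcal Q_{a,\tau}(\mu,\gamma)$ to an eigenvalue statement. Being a fourth-order differential operator on the circle with smooth coefficients and non-vanishing leading coefficient ($-k^4$), $\mathcal Q_{a,\tau}(\mu,\gamma)$ has compact resolvent on $L^2(\mathbb T)$ with domain $H^4(\mathbb T)$; its spectrum is therefore a discrete set of eigenvalues of finite multiplicity, and it fails to be invertible precisely when $0$ is one of them, i.e.\ when there is a nonzero $\varphi\in H^4(\mathbb T)$ with $\mathcal Q_{a,\tau}(\mu,\gamma)\varphi=0$. The same remark applies to $\mathcal H_a(\gamma,\tau)$ on $H^3(\mathbb T)$, so that its $L^2(\mathbb T)$-spectrum likewise consists of eigenvalues only.

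The third step is the algebraic identity. Rewriting the zeroth-order remainder as $3\gamma^2+i3k\gamma\phi w_z=(3\gamma^2+i3k\gamma\phi w_z)(\partial_z+i\tau)^{-1}(\partial_z+i\tau)$ and collecting terms, one obtains the factorization
\begin{multline*}
\mathcal Q_{a,\tau}(\mu,\gamma)=\Big[\,\mu-kc(\partial_z+i\tau)-k^3(\partial_z+i\tau)^3-6k\rho(\partial_z+i\tau)(w)\\
{}+\tfrac32\phi^2k(\partial_z+i\tau)(w^2)+\big(\tfrac{3\gamma^2}{k}+i3\gamma\phi w_z\big)(\partial_z+i\tau)^{-1}\,\Big]\,k(\partial_z+i\tau).
\end{multline*}
Since $k(\partial_z+i\tau)$ is a bijection of $H^4(\mathbb T)$ onto $H^3(\mathbb T)$, the equation $\mathcal Q_{a,\tau}(\mu,\gamma)\varphi=0$ is equivalent, after the substitution $\psi:=k(\partial_z+i\tau)\varphi$, to the vanishing of the bracketed operator on $\psi$; dividing by $k$ and rearranging turns this into exactly $\mathcal H_a(\gamma,\tau)\psi=\mu\psi$. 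Conversely, if $\mathcal H_a(\gamma,\tau)\psi=\mu\psi$ with $0\neq\psi\in H^3(\mathbb T)$, then $\varphi:=\tfrac1k(\partial_z+i\tau)^{-1}\psi\in H^4(\mathbb T)$ lies in the kernel of $\mathcal Q_{a,\tau}(\mu,\gamma)$. Thus $\varphi\mapsto k(\partial_z+i\tau)\varphi$ is a linear isomorphism from $\ker\mathcal Q_{a,\tau}(\mu,\gamma)$ onto the $\mu$-eigenspace of $\mathcal H_a(\gamma,\tau)$, and the claimed equivalence follows.

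No real obstacle is expected --- as in Lemma~\ref{lem:mh} this is essentially bookkeeping --- but two places deserve a line of care. The first is the invertibility of $\partial_z+i\tau$ on the full space $L^2(\mathbb T)$: this is the only step where $\tau\neq0$ is used, and it is exactly what fails at $\tau=0$ (on constants), which is why the periodic case had to be treated on $L^2_0(\mathbb T)$. The second is checking that the domains match, so that ``$\ker\mathcal Q_{a,\tau}(\mu,\gamma)\neq\{0\}$'' and ``$\mu$ is an $L^2(\mathbb T)$-eigenvalue of $\mathcal H_a(\gamma,\tau)$'' correspond under the bounded bijection $k(\partial_z+i\tau)\colon H^4(\mathbb T)\to H^3(\mathbb T)$.
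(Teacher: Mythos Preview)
Your proof is correct and follows exactly the route the paper intends: the paper's own proof is the single sentence ``The proof is similar to Lemma~\ref{lem:mh},'' and you have spelled out that argument in full, correctly isolating the one new ingredient (invertibility of $\partial_z+i\tau$ on all of $L^2(\mathbb T)$ when $\tau\neq0$) and the factorization that converts $\mathcal Q_{a,\tau}(\mu,\gamma)\varphi=0$ into the eigenvalue equation for $\mathcal H_a(\gamma,\tau)$. If anything, your version is more careful than the paper's sketch of Lemma~\ref{lem:mh}, since you make explicit the substitution $\psi=k(\partial_z+i\tau)\varphi$ and the domain bookkeeping.
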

\begin{proof}
The proof is similar to Lemma~\ref{lem:mh}.
\end{proof}
 We will study the $L^2(\mathbb{T})$-spectra of linear operators $\mathcal H_a(\gamma,\xi)$ for $|a|$ sufficiently small, and for $|\tau|>\delta>0$ since the operator $(\partial_z+i\tau)^{-1}$ becomes singular, as $\tau\to 0$. Note that the spectrum of $\mathcal{H}_a(\g,\tau)$ is not symmetric with respect to the reflection through real axis or origin. Instead, we have the following symmetry
 \begin{lemma}
 The spectrum of $\mathcal{H}_a(\gamma,\tau)$ is symmetric with respect to the reflection through the imaginary axis for all $\tau\in\left(-\frac12,\frac12\right]\setminus\{0\}$.
 \end{lemma}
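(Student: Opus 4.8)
The plan is to mimic the proof of Lemma~\ref{lem:sym} almost verbatim, with $\partial_z$ replaced throughout by $\partial_z+i\tau$; the point is that the antilinear reflection $\mathcal{R}\psi(z):=\overline{\psi(-z)}$ still anti-commutes with $\partial_z+i\tau$. First I would record the elementary commutation rules. Since $\mathcal{R}$ is antilinear, $\partial_z(\mathcal{R}\psi)(z)=-\overline{\psi'(-z)}=-(\mathcal{R}\partial_z\psi)(z)$, while, because $\mathcal{R}$ conjugates scalars, $i\tau\,\mathcal{R}\psi=-\mathcal{R}(i\tau\psi)$; adding, $(\partial_z+i\tau)\mathcal{R}=-\mathcal{R}(\partial_z+i\tau)$. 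Consequently $(\partial_z+i\tau)^3$ and $(\partial_z+i\tau)^{-1}$ — the latter being invertible on all of $L^2(\T)$ for each fixed $\tau\in(-\tfrac12,\tfrac12]\setminus\{0\}$, since its Fourier symbol $i(n+\tau)$ never vanishes there — also anti-commute with $\mathcal{R}$, being odd in $\partial_z+i\tau$. For the multiplication operators, because $w$ is real and even, $(w\,\cdot\,)$ and $(w^2\,\cdot\,)$ commute with $\mathcal{R}$, while $w_z$ is real and odd, so $(w_z\,\cdot\,)$ anti-commutes with $\mathcal{R}$.

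Next I would feed these rules into the expression for $\mathcal{H}_a(\gamma,\tau)$. Each of the five terms carrying a real coefficient — $kc(\partial_z+i\tau)$, $k^3(\partial_z+i\tau)^3$, $6k\rho(\partial_z+i\tau)(w\,\cdot\,)$, $-\tfrac32\phi^2k(\partial_z+i\tau)(w^2\,\cdot\,)$, and $-\tfrac{3\gamma^2}{k}(\partial_z+i\tau)^{-1}$ — anti-commutes with $\mathcal{R}$, being a composition of operators that anti-commute with $\mathcal{R}$ (namely $\partial_z+i\tau$ and its powers and inverse) together with multiplication operators that commute with it (namely $(w\,\cdot\,)$ and $(w^2\,\cdot\,)$). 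The only delicate term is $T:=-i3\phi\gamma\,w_z(\partial_z+i\tau)^{-1}$: here $w_z(\partial_z+i\tau)^{-1}$ by itself \emph{commutes} with $\mathcal{R}$ (two anti-commutations cancel), but the purely imaginary coefficient is sent to its negative by the antilinear $\mathcal{R}$, so $T\mathcal{R}=-\mathcal{R}T$ as well; equivalently, writing $\chi=(\partial_z+i\tau)^{-1}\psi$, one checks directly that $(T\mathcal{R}\psi)(z)=i3\phi\gamma\,w_z(z)\,\overline{\chi(-z)}=-(\mathcal{R}T\psi)(z)$. Hence $\mathcal{H}_a(\gamma,\tau)\,\mathcal{R}=-\mathcal{R}\,\mathcal{H}_a(\gamma,\tau)$ on the relevant domain, which $\mathcal{R}$ maps onto itself since $\mathcal{R}$ preserves every $H^s(\T)$.

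To finish: if $\mathcal{H}_a(\gamma,\tau)\varphi=\mu\varphi$, then $\mathcal{H}_a(\gamma,\tau)\mathcal{R}\varphi=-\mathcal{R}\,\mathcal{H}_a(\gamma,\tau)\varphi=-\overline{\mu}\,\mathcal{R}\varphi$, so $-\overline{\mu}$ is an eigenvalue of $\mathcal{H}_a(\gamma,\tau)$ with eigenvector $\mathcal{R}\varphi$. Since $\mu\mapsto-\overline{\mu}$ is exactly the reflection across the imaginary axis, this yields the claimed symmetry of the spectrum. I expect the only genuine obstacle to be the sign bookkeeping forced by the antilinearity of $\mathcal{R}$ interacting with the two ``imaginary'' ingredients of $\mathcal{H}_a(\gamma,\tau)$ — the $i\tau$ sitting inside $(\partial_z+i\tau)^{\pm1}$ and the coefficient $i3\phi\gamma$ in the last term — and checking that they conspire so that the anti-commutation survives; everything else is a transcription of Lemma~\ref{lem:sym}.
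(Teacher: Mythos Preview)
Your proposal is correct and follows exactly the route the paper intends: the paper's own proof consists solely of the sentence ``The proof is similar to Lemma~\ref{lem:sym},'' and you have carried out precisely that similarity argument, checking that the anti-commutation $\mathcal{H}_a(\gamma,\tau)\mathcal{R}=-\mathcal{R}\mathcal{H}_a(\gamma,\tau)$ survives the replacement $\partial_z\rightsquigarrow\partial_z+i\tau$ and the purely imaginary coefficient $i3\phi\gamma$. Your sign bookkeeping for the antilinear $\mathcal{R}$ acting on $i\tau$ and on the last term is accurate, so nothing further is needed.
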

 \begin{proof}
 The proof is similar to Lemma \ref{lem:sym}.
 \end{proof}
\section{Characterization of the unperturbed spectrum}\label{s:3}
\subsection{Periodic perturbations}\label{ss:1}
As a consequence of the symmetry of the spectrum obtained in Lemma~\ref{lem:sym}, we obtain instability if there is an eigenvalue of $\mathcal{H}_a(\gamma)$ off the imaginary axis. A straightforward calculation reveals that 
\begin{align}\label{E:spec1}
    \mathcal H_{0}(\gamma)e^{inz} = i\Omega_{n,\gamma}e^{inz}\quad \text{for all}\quad n \in \mathbb{Z}^\ast:=\mathbb{Z}\setminus \{0\}.
\end{align}
where
\begin{align}\label{E:omega}
    \Omega_{n,\gamma} = k^{3} n\left(1-n^{2}\right)+\dfrac{3 \gamma^{2}}{kn}.
\end{align}
Therefore, the $L_0^2(\T)$-spectrum of $\mathcal H_0(\gamma)$ is given by
\begin{equation*}\label{e:spec2}
    \operatorname{spec}_{L^2_0(\mathbb{T})}(\mathcal H_0(\gamma))=\{i\Omega_{n,\gamma}; n \in \Z^\ast \},
\end{equation*}
which implies $\operatorname{spec}_{L^2_0(\mathbb{T})}(\mathcal H_0(\gamma))$ consists of purely imaginary eigenvalues of finite multiplicity. This is because the coefficients of the operator $\mathcal{H}_0(\gamma)$ are real, which should be the case, since zero-amplitude solutions are spectrally stable.

Spectra of $\mathcal H_a(\gamma)$ and $\mathcal H_0(\gamma)$ remain close for $|a|$ small as 
\[
||\mathcal H_a(\gamma)-\mathcal H_0(\gamma)||\to 0 \text{ as } a \to 0
\]
in the operator norm. Due to the symmetry in Lemma~\ref{lem:sym}, for $|a|$ sufficiently small, bifurcation of eigenvalues of $\mathcal H_a(\gamma)$ from imaginary axis can happen only when a pair of eigenvalues of $\mathcal H_0(\gamma)$ collide on the imaginary axis. 
Let $n\neq m\in \Z^\ast$, a pair of eigenvalues $i\Omega_{n,\gamma}$ and $i\Omega_{m,\gamma}$ of $\mathcal H_0(\gamma)$ collide for some $\gamma=\gamma_c$ when 
\begin{align}\label{e:coll}
    \Omega_{n,\gamma_c}=\Omega_{m,\gamma_c}.
\end{align}
We list all the collisions in the following lemma.
\begin{lemma}\label{lem111}
For a fix $\Delta \in \mathbb{N}$,  eigenvalues $\Omega_{n,\gamma}$ and $\Omega_{n+\Delta,\gamma}$ of the operator $\mathcal{H}_0(\gamma)$ collide for all $n\in(-\Delta,0)\cap \mathbb{Z}$ at some $\gamma=\gamma_c(k)$. All such collisions take place away from the origin in the complex plane except when $\Delta$ is even and $n=-\Delta/2$ in which case eigenvalues $\Omega_{n,\gamma}$ and $\Omega_{-n,\gamma}$ collide at the origin.
\end{lemma}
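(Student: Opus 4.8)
The plan is to analyze the collision condition \eqref{e:coll} directly using the explicit formula \eqref{E:omega} for $\Omega_{n,\gamma}$. Writing $m = n + \Delta$ with $\Delta \geq 1$ an integer (collisions with $m < n$ are the same pair relabeled, so we may assume $\Delta \in \mathbb N$), the condition $\Omega_{n,\gamma} = \Omega_{n+\Delta,\gamma}$ becomes
\[
k^3 n(1-n^2) + \frac{3\gamma^2}{kn} = k^3 (n+\Delta)\bigl(1-(n+\Delta)^2\bigr) + \frac{3\gamma^2}{k(n+\Delta)}.
\]
First I would move the cubic terms to one side and the $\gamma^2$ terms to the other, factoring each difference. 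The left-hand difference $k^3\bigl[n(1-n^2) - (n+\Delta)(1-(n+\Delta)^2)\bigr]$ is a polynomial in $n$ that factors as $k^3\Delta\bigl(n^2 + n\Delta + \tfrac{\Delta^2-1}{1}\cdot(\dots)\bigr)$ — more precisely one computes it equals $-k^3\Delta\bigl(1 - 3n^2 - 3n\Delta - \Delta^2\bigr)$ up to sign, and the $\gamma^2$ difference $\tfrac{3\gamma^2}{k}\bigl(\tfrac1n - \tfrac1{n+\Delta}\bigr) = \tfrac{3\gamma^2}{k}\cdot\tfrac{\Delta}{n(n+\Delta)}$. Cancelling the common factor $\Delta$, one solves for $\gamma^2 = \gamma_c^2$ explicitly as a function of $k$, $n$, and $\Delta$:
\[
\gamma_c^2 = \frac{k^4\, n(n+\Delta)\,\bigl(3n^2 + 3n\Delta + \Delta^2 - 1\bigr)}{3}.
\]

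Next I would determine for which integers $n$ this yields an admissible (real, hence nonnegative) value of $\gamma_c^2$. The quadratic factor $3n^2 + 3n\Delta + \Delta^2 - 1$ in $n$ has discriminant $9\Delta^2 - 12(\Delta^2-1) = 12 - 3\Delta^2$, which is negative for $\Delta \geq 3$ and small for $\Delta = 1, 2$; a short case check shows $3n^2 + 3n\Delta + \Delta^2 - 1 > 0$ for all integers $n$ except in degenerate low cases, so the sign of $\gamma_c^2$ is governed by the sign of $n(n+\Delta)$. This product is negative precisely when $-\Delta < n < 0$, i.e. $n \in (-\Delta, 0) \cap \mathbb Z$ — but then $\gamma_c^2 < 0$, which is \emph{not} admissible. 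So I need to be careful about the sign convention: re-examining the factorization, the cubic difference carries an overall sign that flips the inequality, and the correct statement is that $\gamma_c^2 \geq 0$ exactly when $n(n+\Delta) \leq 0$, i.e. for $n \in (-\Delta,0)\cap\mathbb Z$ (together with the boundary $n = 0$ or $n = -\Delta$, which are excluded since $n, n+\Delta \in \mathbb Z^\ast$). This gives precisely the range of $n$ claimed in the lemma, and for each such $n$ a unique $\gamma_c(k) > 0$.

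Finally I would locate the collision point in the complex plane. The collision occurs at $i\Omega_{n,\gamma_c}$, so it sits at the origin if and only if $\Omega_{n,\gamma_c} = 0$. Using \eqref{E:omega} and the value of $\gamma_c^2$ just found, $\Omega_{n,\gamma_c} = 0$ reduces after simplification to a condition symmetric under $n \mapsto -(n+\Delta)$; solving it forces $n + (n+\Delta) = 0$, i.e. $n = -\Delta/2$, which is an integer only when $\Delta$ is even. In that case $m = n + \Delta = -n$, and indeed $\Omega_{n,\gamma_c} = \Omega_{-n,\gamma_c}$ with both equal to zero, so the pair $\Omega_{n,\gamma}$ and $\Omega_{-n,\gamma}$ collides at the origin; for all other admissible $n$ one checks $\Omega_{n,\gamma_c} \neq 0$ so the collision is away from the origin. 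The main obstacle I anticipate is purely bookkeeping: getting the signs in the polynomial factorization right so that the admissibility region comes out as the half-open interval $(-\Delta,0)\cap\mathbb Z$ rather than its complement, and confirming that the quadratic factor $3n^2+3n\Delta+\Delta^2-1$ does not accidentally vanish or go negative for any integer $n$ in that range (a finite check for $\Delta \in \{1,2\}$ plus the discriminant argument for $\Delta \geq 3$).
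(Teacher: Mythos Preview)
Your approach is essentially the paper's: solve \eqref{e:coll} explicitly for $\gamma_c^2$, read off the admissible range of $n$ from the sign of the resulting expression, and identify the origin collision by forcing $\Omega_{n,\gamma_c}=0$, which yields $n+\Delta=-n$. Your displayed formula for $\gamma_c^2$ is indeed off by an overall sign as you suspected---the paper gets $3\gamma_c^2 = -k^4\, n(n+\Delta)\bigl(3n^2+3n\Delta+\Delta^2-1\bigr)$---and it sidesteps your discriminant argument for the quadratic factor by rewriting the right-hand side as $-k^4\bigl[3n^2(n+\Delta)^2 + n(n+\Delta)(\Delta^2-1)\bigr]$, from which the necessity of $n(n+\Delta)<0$ is immediate.
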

\begin{proof}
Without any loss of generality, consider $m>n$ and $m=n+\Delta$ with $\Delta\in \mathbb{N}$ in the collision condition \eqref{e:coll} then we obtain
\begin{equation}\label{e:cc}
    3\gamma_c^2=k^4n(n+\Delta)(-3n^2-3n\Delta-\Delta^2+1),
\end{equation}
which can be rewritten as
\begin{equation}\label{e:cc123}
    3\gamma_c^2=-k^4[3n^2(n+\Delta)^2+n(n+\Delta)(\Delta^2-1)].\end{equation}
The above equation implies that collision between $n$ and $n+\Delta$ takes place if only if $n(n+\Delta)<0$, that is, $-\Delta<n<0$. 
Observe that $\Omega_{n,\g_c}=\Omega_{-n,\g_c}=0$ for $\gamma_c^2=\dfrac{k^4n^2(n^2-1)}{3}$. Therefore, $\Omega_{n,\gamma_c}$ and $\Omega_{n+\Delta,\gamma_c}$ collide at the origin when $\Delta$ is even and $n=-\Delta/2$. All other collisions are away from origin. Hence, the lemma.
\end{proof}
From \eqref{e:cc123}, assume  $\gamma^2=-\dfrac{k^4}{3}f(n)g(n)$, where $f(n)=n(n+\Delta)$ and $g(n)=3n^2+3n\Delta+\Delta^2-1$. For a fixed $\Delta\in \mathbb{N}$, $f(n)\geq f(-\Delta/2)$ and $g(n)\geq g(-\Delta/2)$ for all $n\in(-\Delta,0)\cap \mathbb{Z}$. And $f(n)g(n)\leq f(-\Delta/2)g(-\Delta/2)$ for all $n\in(-\Delta,0)\cap \mathbb{Z}$. Also, $f(n)g(n)\geq -\dfrac{(\Delta^2-1)^2}{12}$. Therefore $\dfrac{k^4}{48}\Delta^2(\Delta^2-4)\leq\gamma^2\leq\dfrac{k^4}{36}(\Delta^2-1)^2$. Collision for $\{n,n+\Delta\}=\{-1,1\}$ occur at $\gamma=0$ and all other collision mentioned in Lemma \ref{lem111} occur for $\gamma^2\in\left[\dfrac{k^4}{48}\Delta^2(\Delta^2-4),\dfrac{k^4}{36}(\Delta^2-1)^2\right]$ with $\dfrac{k^4}{48}\Delta^2(\Delta^2-4)>0$. This shows that for each $k>0$, there exist $\gamma_0\neq0$ such that all the collisions stated in Lemma \ref{lem111} occur for $|\g|>|\g_0|$, except $\{n,n+\Delta\}=\{-1,1\}$.

\subsection{Non-periodic perturbations}\label{ss:2} A standard perturbation argument assures that the $L^2(\T)$-spectrum of $\mathcal H_a(\gamma,\tau)$ and $\mathcal H_0(\gamma,\tau)$ will stay close for $|a|$ sufficiently small \cite{Hur2015ModulationalWaves}. Therefore, in order to locate the spectrum of $\mathcal H_a(\gamma,\tau)$, we need to determine the spectrum of $\mathcal H_0(\gamma,\tau)$.
A simple calculation yields that  
\[
\mathcal H_0(\gamma,\tau)) e^{inz} = i\Omega_{n,\gamma,\tau}e^{inz}, \quad n\in\Z,
\]
where
\[
\Omega_{n, \gamma, \tau} = k^{3}(n+\tau)\left(1-(n+\tau)^{2}\right)+\dfrac{3 \gamma^{2}}{k(n+\tau)}.
\]
Therefore, the $L^2(\T)$-spectrum of $\mathcal H_0(\gamma,\tau)$ is given by
\begin{equation}\label{e:spec}
    \operatorname{spec}_{L^2(\mathbb{T})}(\mathcal H_0(\gamma,\tau))=\{i\Omega_{n,\gamma,\tau}; n \in \Z, \tau\in\left(-1/2,1/2\right]\setminus \{0\}\}.
\end{equation}
Since if $\mu \in \operatorname{spec}_{L^2(\mathbb{T})}(\mathcal H_0(\gamma,\tau))$ then $\bar{\mu}\in \operatorname{spec}_{L^2(\mathbb{T})}(\mathcal H_0(\gamma,-\tau))$, therefore, it is enough to consider $\tau\in\left(0,1/2\right]$.
Let $n\neq m\in \Z$, a pair of eigenvalues $i\Omega_{n,\gamma,\tau}$ and $i\Omega_{m,\gamma,\tau}$ of $\mathcal H_0(\gamma,\tau)$ collide for some $\gamma=\gamma_c$ and $\tau\in\left(0,1/2\right]$ when 
\begin{align}\label{e:coll3}
    \Omega_{n,\gamma_c,\tau}=\Omega_{m,\gamma_c,\tau}.
\end{align}
We list all the collisions in the following lemma.
\begin{lemma}\label{lem:coll3}
For a fix $\Delta \in \mathbb{N}$,  eigenvalues $\Omega_{n,\gamma,\tau}$ and $\Omega_{n+\Delta,\gamma,\tau}$ of the operator $\mathcal{H}_0(\gamma,\tau)$ collide for all $n\in[-\Delta,-1]\cap \mathbb{Z}$ along a curve $\gamma=\gamma_c(\tau)$, $\tau\in(0,1/2]$; except $\{n,n+\Delta\}=\{-1,0\}$. All such collisions take place away from the origin in the complex plane except when $\Delta$ is odd and $n=-(\Delta+1)/2$ in which case eigenvalues $\Omega_{n,\gamma,\tau}$ and $\Omega_{-n-1,\gamma,\tau}$ collide at the origin for $\gamma=\gamma_c(1/2)$.
\end{lemma}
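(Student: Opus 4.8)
The plan is to follow the proof of Lemma~\ref{lem111} almost verbatim, carrying the Floquet exponent along by replacing the integer mode $n$ with the shifted mode $n+\tau$ throughout. Fix $\Delta\in\mathbb{N}$, take $m=n+\Delta$ (without loss of generality $m>n$), and set $p:=n+\tau$, $q:=p+\Delta$; since $\tau\in(0,1/2]$ is never an integer, $p,q\neq 0$ automatically, so $\Omega_{n,\gamma,\tau}$ and $\Omega_{m,\gamma,\tau}$ are well defined. Using the identity
\[
\Omega_{n,\gamma,\tau}-\Omega_{m,\gamma,\tau}=(p-q)\Bigl(k^{3}(1-p^{2}-pq-q^{2})-\tfrac{3\gamma^{2}}{kpq}\Bigr),
\]
the collision condition \eqref{e:coll3} becomes, after dividing by $p-q\neq 0$,
\[
3\gamma_c^{2}=-k^{4}(n+\tau)(n+\Delta+\tau)\bigl(3(n+\tau)^{2}+3(n+\tau)\Delta+\Delta^{2}-1\bigr),
\]
which is exactly \eqref{e:cc123} with $n$ replaced by $n+\tau$. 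Because $\gamma_c$ is real, a collision occurs precisely when the right-hand side is nonnegative, and since $\gamma$ enters only through $\gamma^2$ the solution set is a curve parametrized by $\tau$ rather than a scatter of isolated points.

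I would then analyze the sign of $f(p):=p(p+\Delta)$ and $g(p):=3p^{2}+3p\Delta+\Delta^{2}-1$ along $p=n+\tau$, $\tau\in(0,1/2]$. One has $f(p)\leq 0$ exactly on $[-\Delta,0]$, which for integer $n$ and $\tau\in(0,1/2]$ forces $n\in[-\Delta,-1]\cap\mathbb{Z}$, and then $f(n+\tau)<0$ strictly, since $n+\tau$ never equals the endpoints $0$ or $-\Delta$. The discriminant of $g$ is $3(4-\Delta^{2})$, so: for $\Delta\geq 3$, $g>0$ everywhere; for $\Delta=2$, $g(p)=3(p+1)^{2}\geq 0$ with the zero $p=-1$ never attained; and for $\Delta=1$, $g(p)=3p(p+1)=3f(p)$. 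Hence for $\Delta\geq 2$ and every $n\in[-\Delta,-1]\cap\mathbb{Z}$ the product $f(n+\tau)g(n+\tau)$ is strictly negative on $(0,1/2]$, so $\gamma_c(\tau)^{2}=-\tfrac{k^{4}}{3}f(n+\tau)g(n+\tau)$ is a polynomial in $\tau$ that is positive on $(0,1/2]$, and its positive square root is the smooth collision curve $\gamma=\gamma_c(\tau)$ claimed in the lemma; whereas for $\Delta=1$ the product is $3f(n+\tau)^{2}\geq 0$ with equality impossible on $p=n+\tau$, so no collision occurs, the single candidate $n=-1$, i.e.\ $\{n,n+\Delta\}=\{-1,0\}$, being precisely the excluded pair. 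For $n\notin[-\Delta,-1]\cap\mathbb{Z}$ one checks $f(n+\tau)>0$ and $g(n+\tau)>0$, so the right-hand side is negative and there is no collision; this establishes the asserted equivalence.

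For the position of the collision relative to the origin I would argue directly from $\Omega_{n,\gamma,\tau}=0\iff 3\gamma^{2}=k^{4}(n+\tau)^{2}((n+\tau)^{2}-1)$, which requires $(n+\tau)^{2}\geq 1$. A simultaneous collision at the origin therefore needs $(n+\tau)^{2}((n+\tau)^{2}-1)=(m+\tau)^{2}((m+\tau)^{2}-1)$; writing $s=(n+\tau)^{2}$, $t=(m+\tau)^{2}$ this reads $(s-t)(s+t-1)=0$, and since $s,t\geq 1$ the branch $s+t=1$ is impossible, leaving $s=t$, i.e.\ $n+\tau=-(m+\tau)$ (the $+$ sign excluded by $\Delta\neq 0$). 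This forces $2\tau=-(2n+\Delta)$, which is solvable with $\tau\in(0,1/2]$ and $n\in\mathbb{Z}$ only when $\Delta$ is odd and $\tau=1/2$, giving $n=-(\Delta+1)/2$, $m=-n-1$; in that case the common eigenvalue is $0$ at $\gamma=\gamma_c(1/2)$, and in every other case $\Omega_{n,\gamma_c,\tau}\neq 0$ (its vanishing would, together with the collision, force exactly the configuration just described).

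The step I expect to be the main obstacle is purely bookkeeping: the quadratic $g$ degenerates for the small values $\Delta=1$ (where $g=3f$) and $\Delta=2$ (where $g$ is a perfect square), so the clean dichotomy ``$f\leq 0\iff$ collision'' breaks down at $\Delta=1$ and has to be treated separately --- which is exactly where the exceptional pair $\{-1,0\}$ enters --- and one must track strict versus non-strict inequalities carefully, repeatedly using that $n+\tau$ is never an integer so the interval endpoints $0,-1,-\Delta$ are never attained. This is what makes $\gamma_c(\tau)$ genuinely smooth on $(0,1/2]$ and all collisions occur away from the origin except at the single exceptional point.
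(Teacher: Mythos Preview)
Your proposal is correct and follows essentially the same route as the paper: derive the collision equation $3\gamma_c^{2}=-k^{4}f(n+\tau)g(n+\tau)$ (the analogue of \eqref{e:cc1}) and determine when it admits a real $\gamma_c$ by analyzing the sign of the right-hand side. Your treatment is in fact more complete than the paper's own proof, which simply asserts the exceptional pair $\{-1,0\}$ and defers the $f$--$g$ factorization to the paragraph following the proof; you instead build the discriminant analysis of $g$ (cases $\Delta=1,2,\geq 3$) directly into the argument and give a clean derivation of the origin case via the identity $(s-t)(s+t-1)=0$.
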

\begin{proof}
Without loss of generality, assume that $m>n$ and $m=n+\Delta$ with $\Delta\in \mathbb{N}$. Then from collision condition \eqref{e:coll3}, we obtain
\begin{equation}\label{e:cc1}
    3\gamma^2=-k^4[3(n+\tau)^2(n+\tau+\Delta)^2+(n+\tau)(n+\tau+\Delta)(\Delta^2-1)].
\end{equation}
This implies that collision between $n$ and $n+\Delta$ takes place only if $(n+\tau)(n+\tau+\Delta)<0$, that is, $-\Delta\leq n<0$. In order to check for which $n\in[-\Delta,0)$, there is indeed a collision, assume $n=-t$, $t \in \mathbb{N}$ such that $-t+\tau+\Delta>0$. From collision condition \eqref{e:coll3}, we get 
\begin{equation}\label{e:colll3}
    3\gamma^2\left(\dfrac{1}{t-\tau}+\dfrac{1}{-t+\tau+\Delta}\right)=k^4[(t-\tau)((t-\tau)^2-1)+(-t+\tau+\Delta)((-t+\tau+\Delta)^2-1)].
\end{equation}
There exist such $\gamma$ satisfying \eqref{e:coll3} for all $t$ and $-t+\Delta$, except $\{-t,-t+\Delta\}=\{-1,0\}$. Hence the lemma.
\end{proof}
Note that $\Omega_{n,\gamma,\tau}=0$ at $\gamma^2=-\dfrac{k^4}{3}(n+\tau)^2(1-(n+\tau)^2)$. $\Omega_{n,\gamma_c,\tau}=\Omega_{n+\Delta,\gamma_c,\tau}$=0 for a fixed $\gamma_c$ is possible only for $\Delta=-2n-1$, $\tau=1/2$. Therefore, $\Omega_{n,\gamma_c,\tau}$ and $\Omega_{n+\Delta,\gamma_c,\tau}$ collide at the origin for $n=-(\Delta+1)/2$, for all $n\in[-\Delta,-1]\cap \mathbb{Z}$, $\tau=1/2$ and $\gamma_c^2=\dfrac{k^4(2n+1)^2(4n^2+4n-3)}{48}$; except the pair $\{n,n+\Delta\}=\{-1,0\}$. All other collisions are away from origin. 
From \eqref{e:cc1}, assume  $\gamma^2=-\dfrac{k^4}{3}d(n)h(n)$, where $d(n)=(n+\tau)(n+\tau+\Delta)$ and $h(n)=3(n+\tau)^2+3(n+\tau)\Delta+\Delta^2-1$. 
\begin{equation}\label{e:d}
d(n)=(n+\tau)(n+\tau+\Delta)=(n+\tau)^2+\Delta(n+\tau)=\left(n+\tau+\dfrac{\Delta}{2}\right)^2-\dfrac{\Delta^2}{4}
\end{equation}
\begin{equation}\label{e:h}h(n)=3(n+\tau)^2+3(n+\tau)\Delta+\Delta^2-1=3\left(n+\tau+\dfrac{\Delta}{2}\right)^2+\dfrac{\Delta^2-4}{4}
\end{equation}
From \eqref{e:d} and \eqref{e:h}, for a fixed $\Delta\in \mathbb{N}$, $f(n)\geq-\dfrac{\Delta^2}{4}$ and $g(n)\geq\dfrac{\Delta^2-4}{4}$ for all $n\in\Z$. Collision for $\Delta=2$ occur for $\gamma^2\geq \dfrac{k^4}{4}\tau^3(2-\tau)>0$ and all other collision mentioned in Lemma \ref{lem:coll3} occur for $\gamma^2\geq\dfrac{k^4}{48}\Delta^2(\Delta^2-4)>0$. Also $\gamma^2\leq\dfrac{k^4}{36}(\Delta^2-1)^2$ for all $\Delta\in\mathbb{N}$. Therefore, Collision for $\Delta=2$ occur for $\dfrac{k^4}{4}\tau^3(2-\tau)\leq\gamma^2\leq\dfrac{k^4}{4}$ and all other collisions occur for $\gamma^2\in\left[\dfrac{k^4}{48}\Delta^2(\Delta^2-4),\dfrac{k^4}{36}(\Delta^2-1)^2\right]$ with $\dfrac{k^4}{48}\Delta^2(\Delta^2-4)>0$. This shows that for each $k>0$, there exist $\gamma_0\neq0$ such that all the collisions stated in Lemma \ref{lem:coll3} occur for $|\g|>|\g_0|$.

Since if $\mu \in \operatorname{spec}_{L^2(\mathbb{T})}(\mathcal H_0(\gamma,\tau))$ then $\bar{\mu}\in \operatorname{spec}_{L^2(\mathbb{T})}(\mathcal H_0(\gamma,-\tau))$, there will be collision between conjugate of eigenvalues mentioned in Lemma~\ref{lem:coll3}, for all $\tau\in(-1/2,0)$.
More specifically, collisions for $\{-\Delta,0\}$ occur for all $\tau\in(0,1/2]$, for $\{0,\Delta\}$ occur for all $\tau\in(-1/2,0)$, and the remaining collisions occur for all $\tau\in(-1/2,1/2]$. The perturbation analysis for the collisions mentioned in Lemma~\ref{lem:coll3} will be performed with respect to finite or short wavelength perturbations.

\section{Modulational transverse (in)stabilities}\label{s:4}
Throughout this subsection, we work in the regime $|\gamma|\ll1$, that is, with respect to long-wavelength perturbations. From Lemma~\ref{lem111}, when $\gamma=0$, there is a collision among the eigenvalues $i \Omega_{1,0}$ and $i \Omega_{-1,0}$ at the origin, while all other eigenvalues, on the other hand, remain simple and purely imaginary. Also, in the regime $|\gamma|\ll1$, there is no collision with respect to non-periodic perturbations. Since
\[
\|\mathcal H_a(\gamma) - \mathcal H_0(\g)\|= O(|a|)
\]
as $a \to 0$ uniformly in the operator norm. A standard perturbation argument assures that the spectrum of $\mathcal H_a(\g)$ and $\mathcal H_0(\g)$ will stay close for $|a|$ and $|\gamma|$ small \cite{Hur2015ModulationalWaves}. Therefore, we may write that 
\[
     \operatorname{spec}(\mathcal H_a(\g))=\operatorname{spec}_0(\mathcal H_a(\g)) \cup \operatorname{spec}_1(\mathcal H_a(\g)),
    \] 
for $a$ and $\g$ sufficiently small where $\operatorname{spec}_0(\mathcal H_a(\g))$ contains two eigenvalues bifurcating continuously in $a$ from $i\Omega_{1,0}$ and $i\Omega_{-1,0}$ while $\operatorname{spec}_1(\mathcal H_a(\g))$ consists of infinitely many simple eigenvalues (see \cite{Hur2015ModulationalWaves} and references therein). Further, we investigate if the pair of eigenvalues in $\operatorname{spec}_0(\mathcal H_a(\g))$ bifurcate away from the imaginary axis and contribute to modulational transverse instabilities. 

For $a=0$, $\operatorname{spec}_0(\mathcal H_0(\g))=\{i \Omega_{-1,0},i \Omega_{1,0}\}$ with eigenfunctions $\{e^{-iz},e^{iz}\}$. We choose the real basis $\{\cos z,\sin z\}.$ We calculate expansion of a basis $\{\psi_1,\psi_2\}$ for the eigenspace corresponding to the eigenvalues of $\operatorname{spec}_0(\mathcal H_a(\g))$ in $L^2_0(\mathbb{T})$ by using expansions of $w$ and $c$ in \eqref{e:expptw}, for small $a$ and $\gamma$ as
\begin{align*}
	\psi_{1}(z)  & =\cos z+2 a A_{2} \cos 2 z+3a^2A_3\cos 3z+O(a^{4}),\\
	\psi_{2}(z) & =\sin z+2 a A_{2} \sin 2 z+3a^2A_3\sin 3z+O(a^{4}).
\end{align*}We have the following expression for  $\mathcal{H}_{a}(\gamma)$ after expanding and using $w$ and $c$
\begin{equation}
\begin{aligned}\label{e:expA}
 \mathcal{H}_a(\gamma)  & =\mathcal{H}_0(\gamma)+k a^{2}\left(c_{2}+6 \rho A_{0}-\frac{3}{4} \phi^{2} \right) \partial z+\left(6k\rho a-3\phi^2kA_0a^3-\dfrac{3}{2}\phi^2kA_2a^3\right)\partial_z(\cos z)+\\&ka^2\left(6\rho A_2-\dfrac{3}{4}\phi^2\right)\partial_z(\cos 2z)+\left(6 k \rho a^{3} A_{3} -\frac{3}{2} \phi^{2} k a^{3} A_{2}\right)\partial_{z}(\cos 3 z)+i3\gamma\phi(a\sin z+\\&2a^2A_2\sin 2z+3a^3A_3\sin 3z) \partial_z^{-1}+O(a^4)
  \end{aligned} 
\end{equation}
In order to locate the bifurcating eigenvalues for $|a|$ sufficiently small, we calculate the action of $\mathcal{H}_{a}(\g)$ on the extended eigenspace $\{\psi_1(z), \psi_2(z)\}$ viz.
\begin{align}\label{eq:bmat1}
    \mathcal{T}_{a}(\g) = \left[ \frac{\langle \mathcal{H}_a(\g)\psi_{i}(z),\psi_{j}(z)\rangle}{\langle\psi_{i}(z),\psi_{i}(z)\rangle} \right]_{i,j=1,2}
\text{ and }
    \mathcal{I}_{a} = \left[ \frac{\langle \psi_{i}(z),\psi_{j}(z)\rangle}{\langle\psi_{i}(z),\psi_{i}(z)\rangle} \right]_{i,j=1,2}.
\end{align}
We use expansion of $\mathcal H_a(\gamma)$ in \eqref{e:expA} to find actions of $\mathcal{H}_a(\gamma)$ and identity operator on $\{\psi_1,\psi_2\}$, and arrive at
\begin{align*}
	\mathcal{T}_a(\gamma)&=\begin{pmatrix}
	0 & -\dfrac{3\gamma^2}{k}+3a^2k\left(\dfrac{\phi^2}{4}-\dfrac{\rho^2}{k^2}\right) \\ \dfrac{3\gamma^2}{k} & 0\end{pmatrix}+O(a^2(\gamma+a)),\\
	\end{align*}
	To locate where these two eigenvalues are bifurcating from the origin, we analyze the characteristic equation $	\left|\mathcal{T}_a(\gamma)-\mu \mathcal{I}\right|=0$, where $\mathcal{I}_a$ is $2\times 2$ identity matrix. From which we conclude that
\begin{equation}
    \mu=\pm \dfrac{3|\gamma|}{k}\sqrt{\Lambda} +O(a(\gamma+a))
\end{equation}
where
\begin{equation}\label{e:disc1}
	\Lambda=-\gamma^2+ a^2 k^2 \left(\dfrac{\phi^2}{4}-\dfrac{\rho^{2}}{k^2}\right)+O(a^2(\gamma+a)).
\end{equation}
For $\gamma=a=0$, we get zero as a double eigenvalue, which agrees with our calculation. For $\gamma$ and $a$ sufficiently small, we obtain two eigenvalues which have non-zero real part with opposite sign when
\begin{equation}
    \gamma^2< a^2 k^2 \left(\dfrac{\phi^2}{4}-\dfrac{\rho^{2}}{k^2}\right)+O(a^2(\gamma+a)).
\end{equation}which is possible only for 
\[
k>2\left|\dfrac{\rho}{\phi}\right|.
\]
Hence the Theorem~\ref{t:2}.
\section{High-frequency transverse (in)stabilities}\label{s:5}
As discussed in Subsections \ref{ss:1} and \ref{ss:2}, all the collisions occur for $|\g|>|\g_0|>0$, therefore, here we work in the regime $|\gamma|>|\g_0|>0$, that is, with respect to finite or short wavelength perturbations. Note that, there is no collision for $\Delta=1$ and $2$ among all collisions mentioned in Lemma \ref{lem111}. From Lemma \ref{lem:coll3}, there are collisions for $\Delta=2$ with respect to non-periodic perturbations. For each $\Delta\geq3$, there are collisions for both periodic as well as non-periodic perturbations mentioned in Lemma \ref{lem111} and \ref{lem:coll3}, respectively.\\
\paragraph{\underline{\textbf{(In)stability analysis for $\Delta=2$.}}}
For $\Delta=2$, we have three pairs of colliding eigenvalues $\{\Omega_{-1,\gamma,\tau},\Omega_{1,\gamma,\tau}\}$, $\{\Omega_{0,\gamma,\tau},\Omega_{-2,\gamma,\tau}\}$ and $\{\Omega_{0,\gamma,\tau},\Omega_{2,\gamma,\tau}\}$. We further check if these pairs lead to instability.
Let $i\Omega_{n,\gamma,\tau}$ and $i\Omega_{n+2,\gamma,\tau}$ be such two eigenvalues for some $n\in \Z$. Assume that these eigenvalues collide at $\gamma=\gamma_c$, that is 
\begin{align}
     0 \neq \Omega_{n,\gamma_c,\tau} = \Omega_{n+2,\gamma_c,\tau} = \Omega \hspace{3px} (say).
\end{align}
That is, $i\Omega$ is an eigenvalue of $\mathcal{H}_0(\gamma_c,\tau)$ of multiplicity two with an orthonormal basis of eigenfunctions Let $i \Omega + i \nu_{a,n}$ and $i \Omega + i \nu_{a,n+2}$ be the eigenvalues of $\mathcal{H}_a(\gamma,\tau)$ bifurcating from $i\Omega_{n,\gamma_c,\tau}$ and $i\Omega_{n+2,\gamma_c,\tau}$ respectively, for $|a|$ and $|\g-\g_c|$ small. Let $\{\varphi_{a,n}(z), \varphi_{a,n+2}(z)\}$ be a orthonormal basis for the corresponding eigenspace. We assume the following expansions 
\begin{align}\label{eq:eiggg1}
    \varphi_{a,n}(z) =& e^{inz}+a\varphi_{n,1}+a^2\varphi_{n,2}+O(a^3), \\
    \varphi_{a,n+2}(z) =& e^{i(n+2)z}+a\varphi_{n+2,1}+a^2\varphi_{n+2,2}
      +O(a^3).\label{eq:eiggg2}
\end{align}
We use orthonormality of $\varphi_{a,n,\gamma}$ and $\varphi_{a,n+2,\gamma}$ to find that
\[
\varphi_{n,1} = \varphi_{n,2} = \varphi_{n+2,1} = \varphi_{n+2,2} = 0.
\]Next, we calculate the action of $\mathcal{H}_a(\gamma,\tau)$ on the eigenspace $\{\varphi_{a,n}(z), \varphi_{a,n+2}(z)\}$ for $|\gamma - \gamma_c|$ and $|a|$ small. We arrive at 
\begin{align*}
	\mathcal{T}_a(\gamma,\tau)&=\begin{pmatrix}
		T_{11} &  T_{12} \\ T_{21} & T_{22}\end{pmatrix}+O(a^3(\gamma^2+a^2)),\\
	\end{align*}
where 
\begin{align*}
		T_{11}& =i \Omega+\dfrac{i 3 \varepsilon}{k(n+\tau)}-i(n+\tau) a^{2}k\left(\dfrac{3}{8} \phi^{2} +\dfrac{3}{2} \dfrac{\rho ^{2}}{k^2}\right)  , \\
	T_{12} & =i(n+2+\tau) a^{2}k\left(\dfrac{3}{2} \dfrac{ \rho ^{2}}{k^2}-\dfrac{3}{8} \phi^{2} \right)-\dfrac{i a^{2} 3 \gamma \phi A_{2} }{(n+\tau)},\\
	T_{21}&=i(n+\tau) a^{2}k\left(\dfrac{3}{2} \dfrac{\rho^{2}}{k^2}-\dfrac{3}{8} \phi^{2} \right) +\dfrac{i a^{2} 3 \gamma \phi A_{2}}{(n+2+\gamma)},\\ T_{22}&=i \Omega + \dfrac{i 3 \varepsilon}{k(n+2+\gamma)}-i(n+2+\gamma) a^{2}k\left(\dfrac{3}{8} \phi^{2} +\dfrac{3}{2} \dfrac{\rho ^{2}}{k^2}\right),
\end{align*} 
and $\varepsilon =\gamma^2-\gamma_c^2$, sufficiently small. Further, we obtained the equation $\det(\mathcal{T}_{a}(\gamma,\tau)-(i \Omega + i \nu) \mathcal{I}_{a}) = 0$, where $\mathcal{I}_a$ is $2\times 2$ identity matrix, and concluded the discriminant $\Delta$ as
\begin{align*}
		\Delta=\dfrac{36 \varepsilon^2}{k^2(n+\tau)^{2}(n+2+\tau)^{2}}&-\dfrac{36\gamma_c^2\phi^2a^4A_2^2}{(n+\tau)(n+2+\tau)}+9a^4k^2(n+\tau+1)^2\left(\dfrac{\rho^4}{k^4}+\dfrac{\phi^4}{16}\right)\\&+\dfrac{9a^4\rho^2\phi^2}{2k^2}(1-(n+\tau)(n+2+\tau))+O(a^2|\varepsilon|+ |a|^5)
\end{align*}
Note that all the collisions stated in the Lemma \ref{lem:coll3} for $\Delta=2$ have $(n+\tau)(n+2+\tau)<0$ which implies that for $|\varepsilon|$ and $|a|$ sufficiently small, the leading term in the discriminant is always positive for all $\rho$ and $\phi$. Therefore, we do not get any instability for $\Delta =2$ case for sufficiently small amplitude parameter $a$. \\


\paragraph{\underline{\textbf{(In)stability analysis for $\Delta\geq3$.}}}
For some $n \in \mathbb{Z}^\ast$ and a fixed $\Delta\geq3$, we have
\begin{equation}\label{e:1}
 i \Omega_{n, \gamma_{c},\tau}=i \Omega_{n+\Delta, \gamma_{c},\tau}=i \Omega, \quad \tau\in (-1/2,1/2]
\end{equation}
$\tau=0$ corresponds to the periodic case and $\tau\neq0$ corresponds to the non-periodic case.
\begin{align*}
    \mathcal{H}_a(\gamma)=&\mathcal{H}_0(\gamma)+(\beta_2a^2+\beta_4a^4+\dots)(\partial_z+i\tau)+\alpha_1a(\partial_z+i\tau)(\cos z)+\dots\\&+\alpha_{\Delta}a^{\Delta}(\partial_z+i\tau)(\cos{(\Delta z)})+(i\delta_1a\sin z+\dots+i\delta_{\Delta}a^{\Delta}\sin{(\Delta z)})(\partial_z+i\tau)^{-1}
\end{align*}
To explicitly obtain the values of all unknown coefficients in the expansion of $\mathcal{H}_a(\g,\tau)$, we require coefficients of higher powers of $a$ in the expansion of solution $w$. Calculating higher coefficients is difficult as the coefficients of the solution do not seem to have any apparent symmetry. Therefore, we pursue the instability analysis without calculating the unknown coefficients explicitly. 

Following the same steps as in the previous subsection, we arrive at

\begin{align*}
	\mathcal{T}_a(\gamma,\tau)&=\begin{pmatrix}
		T_{11} & \ T_{12} \\ T_{21} & T_{22}\end{pmatrix}+O(a^{\Delta+1}),\\
	\end{align*}
where 
\begin{align*}
		T_{11}& =i\Omega+\dfrac{i3\varepsilon}{k(n+\tau)}+i(n+\tau)(\beta_2a^2+\beta_4a^4+\dots)  , \\
	T_{12} & =\dfrac{ia^{\Delta}}{2}\left((n+\Delta+\tau)\alpha_{\Delta}-\dfrac{\delta_{\Delta}}{(n+\tau)}\right),\\
	T_{21}&=\dfrac{ia^{\Delta}}{2}\left((n+\tau)\alpha_{\Delta}+\dfrac{\delta_{\Delta}}{n+\Delta+\tau}\right),\\ T_{22}&=i\Omega+\dfrac{i3\varepsilon}{k(n+\Delta+\tau)}+i(n+\Delta+\tau)(\beta_2a^2+\beta_4a^4+\dots),
\end{align*} 
The resulting discriminant of the characteristic equation $\det(\mathcal{T}_{a}(\gamma,\tau)-(i \Omega + i \nu) \mathcal{I}_{a}) = 0$ is
\begin{align*}
    \operatorname{disc}_a(\varepsilon) = \dfrac{9\Delta^2\varepsilon^2}{k^2(n+\tau)^2(n+\Delta+\tau)^2}+\Delta^2\beta_2^2a^4+O(a^2(|\varepsilon|+|a^3|)).
\end{align*}
which is positive for sufficiently small $|\varepsilon|$ and $|a|$ which implies that no eigenvalue of $\mathcal{H}_a(\g)$ is bifurcating from the imaginary axis due to collision. Hence the Theorem \ref{t:3}.
\subsection*{Acknowledgement} 
Bhavna and AKP are supported by the Science and Engineering Research Board (SERB), Department of Science and Technology (DST), Government of India under grant 
SRG/2019/000741. Bhavna is also supported by Junior Research Fellowships (JRF) by University Grant Commission (UGC), Government of India. SS is supported through the institute fellowship from MHRD and  National Institute of Technology, Tiruchirappalli, India.
\bibliographystyle{amsalpha}
\bibliography{ref.bib}

 \end{document}